\newtheorem{theorem}{Theorem}[section]
\newtheorem{lemma}{Lemma}[section]
\newtheorem{corollary}{Corollary}[section]
\newtheorem{remark}{Remark}[section]
\DeclareMathOperator*{\rank}{rank}
\DeclareMathOperator*{\diag}{diag}
\DeclareMathOperator*{\tr}{tr}
\DeclareMathOperator*{\Retr}{Re\,tr}
\begin{document}
	
\title{On the perturbation of the Moore--Penrose inverse of a matrix}
\author{
Xuefeng Xu%
\thanks{Department of Mathematics, Purdue University, West Lafayette, IN 47907, USA (\texttt{xuxuefeng@lsec.cc.ac.cn; xu1412@purdue.edu}).}
}
%
\date{\today}
\maketitle
	
\begin{abstract}

The Moore--Penrose inverse of a matrix has been extensively investigated and widely applied in many fields over the past decades. One reason for the interest is that the Moore--Penrose inverse can succinctly express some important geometric constructions in finite-dimensional spaces, such as the orthogonal projection onto a subspace and the linear least squares problem. In this paper, we establish new perturbation bounds for the Moore--Penrose inverse under the Frobenius norm, some of which are sharper than the existing ones.

\end{abstract}
	
\noindent{\bf Keywords:} Moore--Penrose inverse, perturbation, singular value decomposition

\medskip

\noindent{\bf AMS subject classifications:} 15A09, 15A18, 47A55, 65F35

\section{Introduction}

Let $\mathbb{C}^{m\times n}$, $\mathbb{C}^{m\times n}_{r}$, and $\mathscr{U}_{n}$ be the set of all $m\times n$ complex matrices, the set of all $m\times n$ complex matrices of rank $r$, and the set of all $n\times n$ unitary matrices, respectively. For any $M\in\mathbb{C}^{m\times n}$, the symbols $M^{\ast}$, $\rank(M)$, $\|M\|_{\mathscr{U}}$, $\|M\|_{2}$, and $\|M\|_{F}$ denote the conjugate transpose, the rank, the unitarily invariant norm, the spectral norm, and the Frobenius norm of $M$, respectively.

The Moore--Penrose (MP) inverse of $M\in\mathbb{C}^{m\times n}$ is denoted by $M^{\dagger}$, which is defined as the unique matrix $X\in\mathbb{C}^{n\times m}$ satisfying the following equations~\cite{Penrose1955,Penrose1956}:
\begin{displaymath}
{\rm (i)} \ MXM=M, \quad {\rm (ii)} \ XMX=X, \quad {\rm (iii)} \ (MX)^{\ast}=MX, \quad {\rm (iv)} \ (XM)^{\ast}=XM.
\end{displaymath}
In particular, if $M$ is a square and nonsingular matrix, then $M^{\dagger}$ will coincide with the usual inverse $M^{-1}$. The MP inverse can concisely express some important geometric constructions in finite-dimensional spaces, such as the orthogonal projection onto a subspace and the linear least squares problem. More specifically, the orthogonal projection onto the column space of $A$ can be expressed as $P_{A}=AA^{\dagger}$; see~\cite{Xu2020} for the perturbation analysis of $P_{A}$. Recall that the linear least squares problem can be described as follows: Find $\mathbf{x}_{\star}\in\mathbb{C}^{n}$ such that
\begin{equation}\label{LSP}
\mathbf{x}_{\star}\in\mathop{\arg\min}_{\mathbf{x}\in\mathbb{C}^{n}}\|A\mathbf{x}-\mathbf{b}\|_{2},
\end{equation}
where $A\in\mathbb{C}^{m\times n}$ and $\mathbf{b}\in\mathbb{C}^{m}$. It is well known that the solutions of~\eqref{LSP} can be formulated as
\begin{displaymath}
\mathbf{x}_{\star}=A^{\dagger}\mathbf{b}+(I_{n}-A^{\dagger}A)\mathbf{z},
\end{displaymath}
where $I_{n}$ denotes the $n\times n$ identity matrix and $\mathbf{z}\in\mathbb{C}^{n}$ is an arbitrary vector. Furthermore, the minimum $2$-norm solution of~\eqref{LSP} is $\mathbf{x}_{\star}=A^{\dagger}\mathbf{b}$. The MP inverse has been widely applied in many fields such as matrix computation, algorithm analysis, statistics, and engineering; see, e.g.,~\cite{Ben2003,Drineas2011,Hoyle2011,Bodnar2016}. Over the past decades, many researchers have investigated the perturbation analysis of MP inverse, which can be found, e.g., in~\cite{Stewart1969,Wedin1973,Abdelmalek1974,Stewart1977,Sun2001,Meng2010,Cai2011,Xu2017,Li2018}.

Let $A\in\mathbb{C}^{m\times n}_{r}$, $B\in\mathbb{C}^{m\times n}_{s}$, and $E=B-A$. Wedin~\cite{Wedin1973} established the estimate (see also~\cite{Sun2001,Meng2010})
\begin{equation}\label{Wedin1}
\|B^{\dagger}-A^{\dagger}\|\leq\mu_{1}\max\big\{\|A^{\dagger}\|_{2}^{2},\|B^{\dagger}\|_{2}^{2}\big\}\|E\|.
\end{equation}
In particular, if $s=r$, then
\begin{equation}\label{Wedin2}
\|B^{\dagger}-A^{\dagger}\|\leq\mu_{2}\|A^{\dagger}\|_{2}\|B^{\dagger}\|_{2}\|E\|.
\end{equation}
The above parameters $\mu_{1}$ and $\mu_{2}$ are listed in Table~\ref{tab:Wedin}.
\medskip
\begin{table}[h!!]
\centering
\setlength{\tabcolsep}{2.6mm}{
\begin{tabular}{@{} lcccc @{}}
\toprule
\ $\|\cdot\|$ & $\mu_{1}$ & $\mu_{2}$ $\big(r<\min\{m,n\}\big)$ & $\mu_{2}$ $\Big(r=\min\limits_{m\neq n}\{m,n\}\Big)$ & $\mu_{2}$ $\big(r=m=n\big)$ \\
\midrule
\ $\|\cdot\|_{\mathscr{U}}$ & $3$ & $3$ & $2$ & $1$ \\
\ $\|\cdot\|_{2}$ & $\frac{1+\sqrt{5}}{2}$ & $\frac{1+\sqrt{5}}{2}$ & $\sqrt{2}$ & $1$ \\
\ $\|\cdot\|_{F}$ & $\sqrt{2}$ & $\sqrt{2}$ & $1$ & $1$ \\
\bottomrule
\end{tabular}}
\caption{\small The values of $\mu_{1}$ and $\mu_{2}$.}
\label{tab:Wedin}
\end{table}
In 2010, Meng and Zheng~\cite[Theorems~2.1 and 2.2]{Meng2010} improved the estimates~\eqref{Wedin1} and~\eqref{Wedin2} under the Frobenius norm. More specifically, they derived that
\begin{equation}\label{Meng1}
\|B^{\dagger}-A^{\dagger}\|_{F}\leq\max\big\{\|A^{\dagger}\|_{2}^{2},\|B^{\dagger}\|_{2}^{2}\big\}\|E\|_{F}.
\end{equation}
In particular, if $s=r$, then
\begin{equation}\label{Meng2}
\|B^{\dagger}-A^{\dagger}\|_{F}\leq\|A^{\dagger}\|_{2}\|B^{\dagger}\|_{2}\|E\|_{F}.
\end{equation}
Recently, Li et al.~\cite[Theorem 3.1]{Li2018} further refined the estimate~\eqref{Meng1}. They obtained that
\begin{equation}\label{Li1}
\|B^{\dagger}-A^{\dagger}\|_{F}^{2}\leq\max\big\{\|A^{\dagger}\|_{2}^{4},\|B^{\dagger}\|_{2}^{4}\big\}\|E\|_{F}^{2}-\frac{\|A^{\dagger}EB^{\dagger}\|_{F}^{2}+\|B^{\dagger}EA^{\dagger}\|_{F}^{2}}{2}\bigg(\max\bigg\{\frac{\|A^{\dagger}\|_{2}^{2}}{\|B^{\dagger}\|_{2}^{2}},\frac{\|B^{\dagger}\|_{2}^{2}}{\|A^{\dagger}\|_{2}^{2}}\bigg\}-1\bigg).
\end{equation}
If $A\in\mathbb{C}^{m\times n}_{n}$ ($m\geq n$) and $B=A+E\in\mathbb{C}^{m\times n}_{s}$, Li et al.~\cite[Theorem 3.2]{Li2018} also proved that
\begin{equation}\label{Li2.1}
\|B^{\dagger}-A^{\dagger}\|_{F}^{2}\leq\frac{\|A^{\dagger}\|_{2}^{2}\|B^{\dagger}\|_{2}^{2}}{\|A^{\dagger}\|_{2}^{2}+\|B^{\dagger}\|_{2}^{2}}\bigg(\|EA^{\dagger}\|_{F}^{2}+\|EB^{\dagger}\|_{F}^{2}+(n-s)\frac{\|A^{\dagger}\|_{2}^{2}}{\|B^{\dagger}\|_{2}^{2}}\bigg).
\end{equation}
In particular, if $s=n$, then
\begin{equation}\label{Li2.2}
\|B^{\dagger}-A^{\dagger}\|_{F}^{2}\leq\min\big\{\|B^{\dagger}\|_{2}^{2}\|EA^{\dagger}\|_{F}^{2},\|A^{\dagger}\|_{2}^{2}\|EB^{\dagger}\|_{F}^{2}\big\}.
\end{equation}

Although the estimate~\eqref{Li1} has sharpened~\eqref{Meng1}, the upper bound in~\eqref{Li1} is still too large in certain cases. We now give a simple example:
\begin{equation}\label{Ex0}
A=\begin{pmatrix}
1 & 0 \\
0 & 0
\end{pmatrix}, \quad B=\begin{pmatrix}
\frac{1}{1+2\tau} & 0 \\
0 & \tau
\end{pmatrix},
\end{equation}
where $0<\tau<\frac{1}{2}$. In this example, it holds that
$$
\|B^{\dagger}-A^{\dagger}\|_{F}^{2}=4\tau^{2}+\frac{1}{\tau^{2}}.
$$
Direct computation yields that the upper bound in~\eqref{Li1} is
\begin{displaymath}
4\tau^{2}+\frac{1}{\tau^{2}}+\frac{4}{\tau^{2}(1+2\tau)^{2}}-4=:u(\tau).
\end{displaymath}
It is easy to see that
\begin{displaymath}
u(\tau)-\|B^{\dagger}-A^{\dagger}\|_{F}^{2}=\frac{4}{\tau^{2}(1+2\tau)^{2}}-4,
\end{displaymath}
which will be very large if $0<\tau\ll\frac{1}{2}$. Moreover, if $\tau$ is sufficiently small, then
\begin{displaymath}
u(\tau)\simeq\frac{5}{\tau^{2}}\simeq 5\|B^{\dagger}-A^{\dagger}\|_{F}^{2}.
\end{displaymath}

Motivated by the above observation, we revisit the perturbation of MP inverse under the Frobenius norm. Some new upper bounds for $\|B^{\dagger}-A^{\dagger}\|_{F}^{2}$ are presented. Theoretical analysis shows that the new bounds are sharper than the existing ones.

The rest of this paper is organized as follows. In Section~\ref{sec:pre}, we introduce a trace inequality and several auxiliary results on $\|B^{\dagger}-A^{\dagger}\|_{F}^{2}$. In Section~\ref{sec:main}, we present some new upper bounds for $\|B^{\dagger}-A^{\dagger}\|_{F}^{2}$, and compare the new bounds with the existing ones theoretically.

\section{Preliminaries}

\label{sec:pre}

\setcounter{equation}{0}

Let $\{\sigma_{i}(M)\}_{i=1}^{t}$ and $\{\sigma_{i}(N)\}_{i=1}^{t}$ be the singular values of $M\in\mathbb{C}^{m\times n}$ and $N\in\mathbb{C}^{m\times n}$, respectively, where $t=\min\{m,n\}$. Assume that $\{\sigma_{i}(M)\}_{i=1}^{t}$ and $\{\sigma_{i}(N)\}_{i=1}^{t}$ are arranged in the same (increasing or decreasing) order. The celebrated von Neumann's trace inequality~\cite{Neumann1937} reads
\begin{displaymath}
\Retr(UMVN^{\ast})\leq\sum_{i=1}^{t}\sigma_{i}(M)\sigma_{i}(N),
\end{displaymath}
where $\Retr(\cdot)$ denotes the real part of the trace of a matrix, and both $U\in\mathscr{U}_{m}$ and $V\in\mathscr{U}_{n}$ are arbitrary. Indeed, the following more accurate characterization for $\Retr(UMVN^{\ast})$~\cite{Neumann1937} holds.

\begin{lemma}
Let $M\in\mathbb{C}^{m\times n}$, $N\in\mathbb{C}^{m\times n}$, and $t=\min\{m,n\}$. Let $\{\sigma_{i}(M)\}_{i=1}^{t}$ and $\{\sigma_{i}(N)\}_{i=1}^{t}$ be the singular values of $M$ and $N$, respectively, which are arranged in the same {\rm(}increasing or decreasing{\rm )} order. Then
\begin{equation}\label{von}
\max_{\substack{\,\, U\in\mathscr{U}_{m} \\ V\in\mathscr{U}_{n}}}\Retr(UMVN^{\ast})=\sum_{i=1}^{t}\sigma_{i}(M)\sigma_{i}(N).
\end{equation}
\end{lemma}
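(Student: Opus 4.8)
The plan is to recognize that the inequality $\Retr(UMVN^{\ast})\le\sum_{i=1}^{t}\sigma_{i}(M)\sigma_{i}(N)$, valid for every $U\in\mathscr{U}_{m}$ and $V\in\mathscr{U}_{n}$, is precisely von Neumann's trace inequality already recorded above. Hence the entire content of the lemma is that this upper bound is actually attained, and I would devote the proof solely to exhibiting unitary matrices achieving equality. This reduces a sharp maximization to an explicit construction.

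First I would introduce singular value decompositions $M=U_{M}\Sigma_{M}V_{M}^{\ast}$ and $N=U_{N}\Sigma_{N}V_{N}^{\ast}$, where $U_{M},U_{N}\in\mathscr{U}_{m}$, $V_{M},V_{N}\in\mathscr{U}_{n}$, and $\Sigma_{M},\Sigma_{N}\in\mathbb{R}^{m\times n}$ are the rectangular diagonal matrices carrying the singular values $\{\sigma_{i}(M)\}_{i=1}^{t}$ and $\{\sigma_{i}(N)\}_{i=1}^{t}$ on their diagonals, both placed in the same order fixed by the statement. Since the singular values are real, $N^{\ast}=V_{N}\Sigma_{N}^{\top}U_{N}^{\ast}$. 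Substituting these factorizations into $\Retr(UMVN^{\ast})$ and invoking the cyclic invariance of the trace, I would rewrite the objective as $\Retr(P\Sigma_{M}Q\Sigma_{N}^{\top})$, where $P:=U_{N}^{\ast}UU_{M}$ and $Q:=V_{M}^{\ast}VV_{N}$.

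The key observation is that, as $U$ ranges over $\mathscr{U}_{m}$ and $V$ over $\mathscr{U}_{n}$, the matrices $P$ and $Q$ independently range over all of $\mathscr{U}_{m}$ and $\mathscr{U}_{n}$, respectively, because $U_{M},U_{N},V_{M},V_{N}$ are fixed unitaries. Maximizing over $(U,V)$ is therefore identical to maximizing over $(P,Q)$. I would then select $P=I_{m}$ and $Q=I_{n}$, equivalently $U=U_{N}U_{M}^{\ast}$ and $V=V_{M}V_{N}^{\ast}$, both manifestly unitary, which collapses the objective to $\tr(\Sigma_{M}\Sigma_{N}^{\top})$. Because $\Sigma_{M}$ and $\Sigma_{N}$ are diagonal with matching ordering, the $i$th diagonal entry of $\Sigma_{M}\Sigma_{N}^{\top}$ equals $\sigma_{i}(M)\sigma_{i}(N)$ for $1\le i\le t$ and the remaining diagonal entries vanish, so $\tr(\Sigma_{M}\Sigma_{N}^{\top})=\sum_{i=1}^{t}\sigma_{i}(M)\sigma_{i}(N)$. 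This value is already real and nonnegative, so its real part equals itself, and the chosen pair $(U,V)$ attains the claimed quantity.

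The main subtlety, rather than an obstacle, lies in the ordering requirement: the identity $\tr(\Sigma_{M}\Sigma_{N}^{\top})=\sum_{i=1}^{t}\sigma_{i}(M)\sigma_{i}(N)$ pairs $\sigma_{i}(M)$ with $\sigma_{i}(N)$ correctly only when both singular value sequences occupy the diagonals in the same order, which is exactly the hypothesis of the lemma. Combining the achievability furnished by this explicit choice with the von Neumann upper bound then yields the asserted equality. I expect no further difficulty, since the genuinely hard direction, namely the inequality itself, is supplied by the cited von Neumann result and need not be reproved here.
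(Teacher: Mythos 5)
Your proposal is correct, but note that the paper itself offers no proof of this lemma at all: it simply states the sharp equality with a citation to von Neumann (1937), immediately after recording the inequality direction $\Retr(UMVN^{\ast})\leq\sum_{i=1}^{t}\sigma_{i}(M)\sigma_{i}(N)$ with the same citation. So there is no in-paper argument to compare against; what you have written is a complete and valid justification of the part the citation leaves implicit, namely attainment. Your construction checks out: with $M=U_{M}\Sigma_{M}V_{M}^{\ast}$ and $N=U_{N}\Sigma_{N}V_{N}^{\ast}$ (diagonals arranged in the order fixed by the hypothesis, which any SVD accommodates after a simultaneous permutation of the columns of the unitary factors), cyclicity of the trace turns the objective into $\Retr(P\Sigma_{M}Q\Sigma_{N}^{\top})$ with $P=U_{N}^{\ast}UU_{M}$ and $Q=V_{M}^{\ast}VV_{N}$, and since $U\mapsto U_{N}^{\ast}UU_{M}$ and $V\mapsto V_{M}^{\ast}VV_{N}$ are bijections of $\mathscr{U}_{m}$ and $\mathscr{U}_{n}$, choosing $P=I_{m}$, $Q=I_{n}$ (i.e., $U=U_{N}U_{M}^{\ast}$, $V=V_{M}V_{N}^{\ast}$) gives exactly $\tr(\Sigma_{M}\Sigma_{N}^{\top})=\sum_{i=1}^{t}\sigma_{i}(M)\sigma_{i}(N)$, a real number, so the upper bound is attained and the maximum equals it. Your handling of the ordering hypothesis is also right, and is worth having made explicit: the identity pairs $\sigma_{i}(M)$ with $\sigma_{i}(N)$ index by index, and the sum is the same whether both sequences are taken increasing or both decreasing, since reversing both simultaneously preserves the pairing. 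The only logical dependency you inherit is the inequality direction itself, which you deliberately do not reprove; that is the same division of labor the paper makes, so nothing is missing relative to the source.
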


Based on the singular value decomposition (SVD) of a matrix, we can derive two characterizations of $\|B^{\dagger}-A^{\dagger}\|_{F}^{2}$ (see Lemma~\ref{identity}), which play a fundamental role in our analysis. Let $A\in\mathbb{C}^{m\times n}_{r}$ and $B\in\mathbb{C}^{m\times n}_{s}$ (\textit{throughout this paper, we only consider the nontrivial case that $r\geq 1$ and $s\geq 1$}) have the following SVDs:
\begin{subequations}
\begin{align}
&A=U\begin{pmatrix}
\Sigma_{1} & 0 \\
0 & 0
\end{pmatrix}V^{\ast}=U_{1}\Sigma_{1}V_{1}^{\ast},\label{SVD-A}\\
&B=\widetilde{U}\begin{pmatrix}
\widetilde{\Sigma}_{1} & 0 \\
0 & 0
\end{pmatrix}\widetilde{V}^{\ast}=\widetilde{U}_{1}\widetilde{\Sigma}_{1}\widetilde{V}_{1}^{\ast},\label{SVD-B}
\end{align}
\end{subequations}
where $U=(U_{1},U_{2})\in\mathscr{U}_{m}$, $V=(V_{1},V_{2})\in\mathscr{U}_{n}$, $\widetilde{U}=(\widetilde{U}_{1},\widetilde{U}_{2})\in\mathscr{U}_{m}$, $\widetilde{V}=(\widetilde{V}_{1},\widetilde{V}_{2})\in\mathscr{U}_{n}$, $U_{1}\in\mathbb{C}^{m\times r}$, $V_{1}\in\mathbb{C}^{n\times r}$, $\widetilde{U}_{1}\in\mathbb{C}^{m\times s}$, $\widetilde{V}_{1}\in\mathbb{C}^{n\times s}$, $\Sigma_{1}=\diag(\sigma_{1},\ldots,\sigma_{r})$, $\widetilde{\Sigma}_{1}=\diag(\widetilde{\sigma}_{1},\ldots,\widetilde{\sigma}_{s})$, $\sigma_{1}\geq\cdots\geq\sigma_{r}>0$, and $\widetilde{\sigma}_{1}\geq\cdots\geq\widetilde{\sigma}_{s}>0$. In view of~\eqref{SVD-A} and~\eqref{SVD-B}, the MP inverses $A^{\dagger}$ and $B^{\dagger}$ can be explicitly expressed as follows:
\begin{subequations}
\begin{align}
&A^{\dagger}=V\begin{pmatrix}
\Sigma_{1}^{-1} & 0 \\
0 & 0
\end{pmatrix}U^{\ast}=V_{1}\Sigma_{1}^{-1}U_{1}^{\ast},\label{MP-A}\\
&B^{\dagger}=\widetilde{V}\begin{pmatrix}
\widetilde{\Sigma}_{1}^{-1} & 0 \\
0 & 0
\end{pmatrix}\widetilde{U}^{\ast}=\widetilde{V}_{1}\widetilde{\Sigma}_{1}^{-1}\widetilde{U}_{1}^{\ast}.\label{MP-B}
\end{align}
\end{subequations}

Using~\eqref{SVD-A}, \eqref{SVD-B}, \eqref{MP-A}, and~\eqref{MP-B}, we can obtain the following identities.

\begin{lemma}\label{identity}
Let $A\in\mathbb{C}^{m\times n}_{r}$ and $B\in\mathbb{C}^{m\times n}_{s}$ have the SVDs~\eqref{SVD-A} and~\eqref{SVD-B}, respectively, and let $E=B-A$. Then
\begin{subequations}
\begin{align}
&\|B^{\dagger}-A^{\dagger}\|_{F}^{2}=\|\widetilde{\Sigma}_{1}^{-1}\widetilde{U}_{1}^{\ast}U_{2}\|_{F}^{2}+\|\widetilde{V}_{2}^{\ast}V_{1}\Sigma_{1}^{-1}\|_{F}^{2}+\|B^{\dagger}EA^{\dagger}\|_{F}^{2},\label{ide1.1}\\
&\|B^{\dagger}-A^{\dagger}\|_{F}^{2}=\|\widetilde{U}_{2}^{\ast}U_{1}\Sigma_{1}^{-1}\|_{F}^{2}+\|\widetilde{\Sigma}_{1}^{-1}\widetilde{V}_{1}^{\ast}V_{2}\|_{F}^{2}+\|A^{\dagger}EB^{\dagger}\|_{F}^{2}.\label{ide1.2}
\end{align}
\end{subequations}
\end{lemma}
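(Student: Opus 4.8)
The plan is to expand $B^\dagger - A^\dagger$ directly using the SVD expressions~\eqref{MP-A} and~\eqref{MP-B}, then compute the squared Frobenius norm by splitting it via orthogonal projectors. Writing $B^\dagger - A^\dagger = \widetilde{V}_1\widetilde{\Sigma}_1^{-1}\widetilde{U}_1^\ast - V_1\Sigma_1^{-1}U_1^\ast$, the key observation is that the columns of $U = (U_1, U_2)$ form a unitary basis of $\mathbb{C}^m$, and likewise the columns of $\widetilde{V} = (\widetilde{V}_1, \widetilde{V}_2)$ form a unitary basis of $\mathbb{C}^n$. Since the Frobenius norm is unitarily invariant, I would insert the resolutions of the identity $I_m = U_1 U_1^\ast + U_2 U_2^\ast$ and $I_n = \widetilde{V}_1\widetilde{V}_1^\ast + \widetilde{V}_2\widetilde{V}_2^\ast$ to decompose the difference into blocks that live on orthogonal subspaces, so that their Frobenius norms add in quadrature.

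Concretely, to prove~\eqref{ide1.1}, I would multiply $B^\dagger - A^\dagger$ on the left by $\widetilde{V}^\ast$ and on the right by $U$, which leaves the Frobenius norm unchanged. This transforms $B^\dagger$ into a block-diagonal form supported on the leading $s \times s$ block (in the $\widetilde{V}$, $\widetilde{U}$ coordinates), while $A^\dagger$ becomes $\widetilde{V}^\ast V_1 \Sigma_1^{-1} U_1^\ast U$. Partitioning according to the $(\widetilde{V}_1, \widetilde{V}_2)$ split of the row space and the $(U_1, U_2)$ split of the column space produces a $2\times 2$ block array. The crucial structural fact is that $B^\dagger$ has zero blocks everywhere except where it overlaps with the range of $\widetilde{V}_1$ on the left and the range of $\widetilde{U}_1$ on the right; using $U_1^\ast U_1 = I_r$, $U_1^\ast U_2 = 0$, and the analogous relations for $V$, $\widetilde{U}$, $\widetilde{V}$, the off-diagonal and complementary blocks collapse to exactly the three terms $\|\widetilde{\Sigma}_1^{-1}\widetilde{U}_1^\ast U_2\|_F^2$, $\|\widetilde{V}_2^\ast V_1 \Sigma_1^{-1}\|_F^2$, and a remaining block that I must identify with $\|B^\dagger E A^\dagger\|_F^2$.

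The main obstacle I anticipate is precisely that last identification: showing that the surviving block, which naively looks like a combination of $\widetilde{V}_1^\ast V_1$, $\Sigma_1^{-1}$, $\widetilde{\Sigma}_1^{-1}$, and $\widetilde{U}_1^\ast U_1$ terms, equals $\|B^\dagger E A^\dagger\|_F^2$. To handle this I would compute $B^\dagger E A^\dagger$ explicitly using $E = B - A$, exploiting the Penrose identities $B^\dagger B B^\dagger = B^\dagger$ and $A^\dagger A A^\dagger = A^\dagger$ together with the projector identities $B B^\dagger = \widetilde{U}_1\widetilde{U}_1^\ast$, $B^\dagger B = \widetilde{V}_1\widetilde{V}_1^\ast$, $A A^\dagger = U_1 U_1^\ast$, and $A^\dagger A = V_1 V_1^\ast$. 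Expanding $B^\dagger(B - A)A^\dagger = B^\dagger B A^\dagger - B^\dagger A A^\dagger = B^\dagger A^\dagger(\text{suitably projected})$ and pushing everything into the $\widetilde{V}$, $U$ coordinate system should match the residual block term exactly.

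The identity~\eqref{ide1.2} follows by an entirely symmetric argument: I would instead multiply on the left by $V^\ast$ and on the right by $\widetilde{U}$, using $I_m = \widetilde{U}_1\widetilde{U}_1^\ast + \widetilde{U}_2\widetilde{U}_2^\ast$ and $I_n = V_1 V_1^\ast + V_2 V_2^\ast$ to split the difference, which swaps the roles of $A$ and $B$ and yields the three terms $\|\widetilde{U}_2^\ast U_1 \Sigma_1^{-1}\|_F^2$, $\|\widetilde{\Sigma}_1^{-1}\widetilde{V}_1^\ast V_2\|_F^2$, and $\|A^\dagger E B^\dagger\|_F^2$. Alternatively, applying~\eqref{ide1.1} with the roles of $A$ and $B$ interchanged and noting $\|B^\dagger - A^\dagger\|_F = \|A^\dagger - B^\dagger\|_F$ gives~\eqref{ide1.2} immediately, so I expect to present~\eqref{ide1.1} in full detail and then invoke symmetry for~\eqref{ide1.2}.
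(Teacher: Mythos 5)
Your proposal is correct and follows essentially the same route as the paper: conjugating $B^{\dagger}-A^{\dagger}$ by $\widetilde{V}^{\ast}$ and $U$ to get a $2\times 2$ block array whose off-corner blocks give $\|\widetilde{\Sigma}_{1}^{-1}\widetilde{U}_{1}^{\ast}U_{2}\|_{F}^{2}$ and $\|\widetilde{V}_{2}^{\ast}V_{1}\Sigma_{1}^{-1}\|_{F}^{2}$, and then identifying the $(1,1)$ block with $B^{\dagger}EA^{\dagger}$ by computing $\widetilde{V}^{\ast}B^{\dagger}EA^{\dagger}U$ via the projector identities, exactly as in the paper's proof of~\eqref{ide1.1}; your symmetry shortcut for~\eqref{ide1.2} (swapping $A$ and $B$) is a harmless simplification of the paper's parallel computation with $V^{\ast}(B^{\dagger}-A^{\dagger})\widetilde{U}$.
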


\begin{proof}
By~\eqref{MP-A} and~\eqref{MP-B}, we have
\begin{align*}
&\widetilde{V}^{\ast}(B^{\dagger}-A^{\dagger})U=\begin{pmatrix}
\widetilde{\Sigma}_{1}^{-1}\widetilde{U}_{1}^{\ast}U_{1}-\widetilde{V}_{1}^{\ast}V_{1}\Sigma_{1}^{-1} & \widetilde{\Sigma}_{1}^{-1}\widetilde{U}_{1}^{\ast}U_{2} \\
-\widetilde{V}_{2}^{\ast}V_{1}\Sigma_{1}^{-1} & 0
\end{pmatrix},\\
&V^{\ast}(B^{\dagger}-A^{\dagger})\widetilde{U}=\begin{pmatrix}
V_{1}^{\ast}\widetilde{V}_{1}\widetilde{\Sigma}_{1}^{-1}-\Sigma_{1}^{-1}U_{1}^{\ast}\widetilde{U}_{1} & -\Sigma_{1}^{-1}U_{1}^{\ast}\widetilde{U}_{2} \\
V_{2}^{\ast}\widetilde{V}_{1}\widetilde{\Sigma}_{1}^{-1}  & 0
\end{pmatrix}.
\end{align*}
Then
\begin{subequations}
\begin{align}
&\|B^{\dagger}-A^{\dagger}\|_{F}^{2}=\|\widetilde{\Sigma}_{1}^{-1}\widetilde{U}_{1}^{\ast}U_{2}\|_{F}^{2}+\|\widetilde{V}_{2}^{\ast}V_{1}\Sigma_{1}^{-1}\|_{F}^{2}+\|\widetilde{\Sigma}_{1}^{-1}\widetilde{U}_{1}^{\ast}U_{1}-\widetilde{V}_{1}^{\ast}V_{1}\Sigma_{1}^{-1}\|_{F}^{2},\label{rela1.1}\\
&\|B^{\dagger}-A^{\dagger}\|_{F}^{2}=\|\Sigma_{1}^{-1}U_{1}^{\ast}\widetilde{U}_{2}\|_{F}^{2}+\|V_{2}^{\ast}\widetilde{V}_{1}\widetilde{\Sigma}_{1}^{-1}\|_{F}^{2}+\|V_{1}^{\ast}\widetilde{V}_{1}\widetilde{\Sigma}_{1}^{-1}-\Sigma_{1}^{-1}U_{1}^{\ast}\widetilde{U}_{1}\|_{F}^{2}.\label{rela1.2}
\end{align}
\end{subequations}
In addition, using~\eqref{SVD-A}, \eqref{SVD-B}, \eqref{MP-A}, and~\eqref{MP-B}, we get
\begin{align*}
&\widetilde{V}^{\ast}B^{\dagger}EA^{\dagger}U=\begin{pmatrix}
\widetilde{V}_{1}^{\ast}V_{1}\Sigma_{1}^{-1}-\widetilde{\Sigma}_{1}^{-1}\widetilde{U}_{1}^{\ast}U_{1} & 0 \\
0 & 0
\end{pmatrix},\\
&V^{\ast}A^{\dagger}EB^{\dagger}\widetilde{U}=\begin{pmatrix}
\Sigma_{1}^{-1}U_{1}^{\ast}\widetilde{U}_{1}-V_{1}^{\ast}\widetilde{V}_{1}\widetilde{\Sigma}_{1}^{-1} & 0 \\
0 & 0
\end{pmatrix}.
\end{align*}
Hence,
\begin{subequations}
\begin{align}
&\|B^{\dagger}EA^{\dagger}\|_{F}^{2}=\|\widetilde{V}_{1}^{\ast}V_{1}\Sigma_{1}^{-1}-\widetilde{\Sigma}_{1}^{-1}\widetilde{U}_{1}^{\ast}U_{1}\|_{F}^{2},\label{B+EA+}\\
&\|A^{\dagger}EB^{\dagger}\|_{F}^{2}=\|\Sigma_{1}^{-1}U_{1}^{\ast}\widetilde{U}_{1}-V_{1}^{\ast}\widetilde{V}_{1}\widetilde{\Sigma}_{1}^{-1}\|_{F}^{2}.\label{A+EB+}
\end{align}
\end{subequations}

Combining~\eqref{rela1.1} and~\eqref{B+EA+}, we can arrive at the identity~\eqref{ide1.1}. Similarly, the identity~\eqref{ide1.2} follows immediately from~\eqref{rela1.2} and~\eqref{A+EB+}.
\end{proof}

The following corollary can be directly deduced from Lemma~\ref{identity}.

\begin{corollary}
Under the assumptions of Lemma~{\rm\ref{identity}}, we have
\begin{subequations}
\begin{align}
&\|B^{\dagger}-A^{\dagger}\|_{F}^{2}\leq\|B^{\dagger}\|_{2}^{2}\|\widetilde{U}_{1}^{\ast}U_{2}\|_{F}^{2}+\|A^{\dagger}\|_{2}^{2}\|\widetilde{V}_{2}^{\ast}V_{1}\|_{F}^{2}+\|B^{\dagger}EA^{\dagger}\|_{F}^{2},\label{upE+1.1}\\
&\|B^{\dagger}-A^{\dagger}\|_{F}^{2}\leq\|A^{\dagger}\|_{2}^{2}\|\widetilde{U}_{2}^{\ast}U_{1}\|_{F}^{2}+\|B^{\dagger}\|_{2}^{2}\|\widetilde{V}_{1}^{\ast}V_{2}\|_{F}^{2}+\|A^{\dagger}EB^{\dagger}\|_{F}^{2}.\label{upE+1.2}
\end{align}
\end{subequations}
\end{corollary}

If $\rank(B)=\rank(A)$, then $\|\widetilde{U}_{1}^{\ast}U_{2}\|_{F}$, $\|\widetilde{U}_{2}^{\ast}U_{1}\|_{F}$, $\|\widetilde{V}_{1}^{\ast}V_{2}\|_{F}$, and $\|\widetilde{V}_{2}^{\ast}V_{1}\|_{F}$ have the following relations~\cite[Lemma~2.2]{Chen2016}.

\begin{lemma}\label{s=r}
Let $A\in\mathbb{C}^{m\times n}_{r}$ and $B\in\mathbb{C}^{m\times n}_{s}$ have the SVDs~\eqref{SVD-A} and~\eqref{SVD-B}, respectively. If $s=r$, then
\begin{displaymath}
\|\widetilde{U}_{1}^{\ast}U_{2}\|_{F}=\|\widetilde{U}_{2}^{\ast}U_{1}\|_{F} \quad \text{and} \quad \|\widetilde{V}_{1}^{\ast}V_{2}\|_{F}=\|\widetilde{V}_{2}^{\ast}V_{1}\|_{F}.
\end{displaymath}
\end{lemma}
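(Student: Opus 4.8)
The plan is to turn each claimed equality into a trace computation involving the orthogonal projections attached to the column spaces of $A$ and $B$, and then let the complementarity relations forced by unitarity do the work. First I would record the consequences of $U=(U_1,U_2)\in\mathscr{U}_m$ and $\widetilde{U}=(\widetilde{U}_1,\widetilde{U}_2)\in\mathscr{U}_m$ being unitary: namely $U_1U_1^{\ast}+U_2U_2^{\ast}=I_m$ and $\widetilde{U}_1\widetilde{U}_1^{\ast}+\widetilde{U}_2\widetilde{U}_2^{\ast}=I_m$, together with the orthonormality $U_1^{\ast}U_1=I_r$ and $\widetilde{U}_1^{\ast}\widetilde{U}_1=I_s$. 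Using $\|M\|_F^2=\tr(M^{\ast}M)$ and the cyclic invariance of the trace, I rewrite the two quantities in question as
\[
\|\widetilde{U}_1^{\ast}U_2\|_F^2=\tr\bigl(U_2^{\ast}\widetilde{U}_1\widetilde{U}_1^{\ast}U_2\bigr)=\tr\bigl(\widetilde{U}_1\widetilde{U}_1^{\ast}U_2U_2^{\ast}\bigr),\qquad
\|\widetilde{U}_2^{\ast}U_1\|_F^2=\tr\bigl(\widetilde{U}_2\widetilde{U}_2^{\ast}U_1U_1^{\ast}\bigr).
\]

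Next I would substitute the complementary projections $U_2U_2^{\ast}=I_m-U_1U_1^{\ast}$ and $\widetilde{U}_2\widetilde{U}_2^{\ast}=I_m-\widetilde{U}_1\widetilde{U}_1^{\ast}$ into these expressions. Since $\tr(\widetilde{U}_1\widetilde{U}_1^{\ast})=\tr(\widetilde{U}_1^{\ast}\widetilde{U}_1)=\tr(I_s)=s$ and likewise $\tr(U_1U_1^{\ast})=r$, this yields
\[
\|\widetilde{U}_1^{\ast}U_2\|_F^2=s-\tr\bigl(\widetilde{U}_1\widetilde{U}_1^{\ast}U_1U_1^{\ast}\bigr),\qquad
\|\widetilde{U}_2^{\ast}U_1\|_F^2=r-\tr\bigl(\widetilde{U}_1\widetilde{U}_1^{\ast}U_1U_1^{\ast}\bigr).
\]
The crucial point is that the same cross term $\tr(\widetilde{U}_1\widetilde{U}_1^{\ast}U_1U_1^{\ast})$ appears in both lines, so subtracting gives $\|\widetilde{U}_1^{\ast}U_2\|_F^2-\|\widetilde{U}_2^{\ast}U_1\|_F^2=s-r$. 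Under the hypothesis $s=r$ the right-hand side vanishes, which establishes the first identity.

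The second identity $\|\widetilde{V}_1^{\ast}V_2\|_F=\|\widetilde{V}_2^{\ast}V_1\|_F$ follows by the verbatim argument with $U_i$ and $\widetilde{U}_i$ replaced by $V_i$ and $\widetilde{V}_i$, using that $V=(V_1,V_2)\in\mathscr{U}_n$ and $\widetilde{V}=(\widetilde{V}_1,\widetilde{V}_2)\in\mathscr{U}_n$ and that $V_1$, $\widetilde{V}_1$ again have $r$ and $s$ columns respectively. I would simply remark that the $V$-case is symmetric and omit the repetition.

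I do not expect a genuine obstacle here; the computation is short once it is phrased through traces of projections. The only point requiring care is the bookkeeping that makes the cancellation work: one must verify that the mixed term is \emph{literally identical} in the two expansions (so that it drops out), and that the surviving constants are exactly $\tr(I_s)=s$ and $\tr(I_r)=r$. This is precisely the place where the rank assumption enters, and it also explains transparently \emph{why} the equalities can fail when $s\neq r$, the defect being exactly $s-r$.
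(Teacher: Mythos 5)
Your proof is correct. One thing to be aware of: the paper itself does not prove this lemma at all---it is imported verbatim from the literature (cited as \cite[Lemma~2.2]{Chen2016})---so there is no internal proof to compare against, and your argument supplies a self-contained justification where the paper has only a reference. Your route, rewriting both squared norms as traces against the orthogonal projections $U_{1}U_{1}^{\ast}$ and $\widetilde{U}_{1}\widetilde{U}_{1}^{\ast}$ and invoking the complementarity $U_{2}U_{2}^{\ast}=I_{m}-U_{1}U_{1}^{\ast}$, $\widetilde{U}_{2}\widetilde{U}_{2}^{\ast}=I_{m}-\widetilde{U}_{1}\widetilde{U}_{1}^{\ast}$, is the standard one; it is equivalent to the Pythagorean identities $\|\widetilde{U}_{1}^{\ast}U_{1}\|_{F}^{2}+\|\widetilde{U}_{1}^{\ast}U_{2}\|_{F}^{2}=\|\widetilde{U}_{1}^{\ast}U\|_{F}^{2}=\tr(I_{s})=s$ and $\|\widetilde{U}_{1}^{\ast}U_{1}\|_{F}^{2}+\|\widetilde{U}_{2}^{\ast}U_{1}\|_{F}^{2}=\|\widetilde{U}^{\ast}U_{1}\|_{F}^{2}=\tr(I_{r})=r$, whose difference is what your trace bookkeeping computes. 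All steps check out: cyclicity of the trace is applied legitimately, the mixed term $\tr\bigl(\widetilde{U}_{1}\widetilde{U}_{1}^{\ast}U_{1}U_{1}^{\ast}\bigr)$ is indeed literally the same in both expansions, and the $V$-case is genuinely symmetric since $V_{1}$ and $\widetilde{V}_{1}$ have $r$ and $s$ columns respectively. A nice byproduct of your formulation is the exact defect identity $\|\widetilde{U}_{1}^{\ast}U_{2}\|_{F}^{2}-\|\widetilde{U}_{2}^{\ast}U_{1}\|_{F}^{2}=s-r$, which not only yields the lemma when $s=r$ but also quantifies precisely how it fails otherwise---information the bare statement does not record.
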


\section{Main results}

\label{sec:main}

\setcounter{equation}{0}

In this section, we develop some new perturbation bounds for $\|B^{\dagger}-A^{\dagger}\|_{F}^{2}$. The first estimate depends only on the positive singular values of $A$ and $B$.

\begin{theorem}
Let $A\in\mathbb{C}^{m\times n}_{r}$ and $B\in\mathbb{C}^{m\times n}_{s}$ have the positive singular values $\{\sigma_{i}\}_{i=1}^{r}$ and $\{\widetilde{\sigma}_{i}\}_{i=1}^{s}$, respectively, where $\sigma_{1}\geq\cdots\geq\sigma_{r}$ and $\widetilde{\sigma}_{1}\geq\cdots\geq\widetilde{\sigma}_{s}$.

{\rm (i)} If $s\leq r$, then
\begin{equation}\label{up+low1}
\sum_{i=1}^{s}\bigg(\frac{1}{\sigma_{i}}-\frac{1}{\widetilde{\sigma}_{i}}\bigg)^{2}+\sum_{i=s+1}^{r}\frac{1}{\sigma_{i}^{2}}\leq\|B^{\dagger}-A^{\dagger}\|_{F}^{2}\leq\sum_{i=1}^{s}\bigg(\frac{1}{\sigma_{i}}+\frac{1}{\widetilde{\sigma}_{i}}\bigg)^{2}+\sum_{i=s+1}^{r}\frac{1}{\sigma_{i}^{2}}.
\end{equation}

{\rm (ii)} If $s>r$, then
\begin{equation}\label{up+low2}
\sum_{i=1}^{r}\bigg(\frac{1}{\sigma_{i}}-\frac{1}{\widetilde{\sigma}_{i}}\bigg)^{2}+\sum_{i=r+1}^{s}\frac{1}{\widetilde{\sigma}_{i}^{2}}\leq\|B^{\dagger}-A^{\dagger}\|_{F}^{2}\leq\sum_{i=1}^{r}\bigg(\frac{1}{\sigma_{i}}+\frac{1}{\widetilde{\sigma}_{i}}\bigg)^{2}+\sum_{i=r+1}^{s}\frac{1}{\widetilde{\sigma}_{i}^{2}}.
\end{equation}
\end{theorem}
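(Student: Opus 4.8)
The plan is to collapse the whole quantity onto a single scalar cross term and then control that term with the von Neumann trace inequality~\eqref{von}. Expanding the Frobenius norm gives the exact identity
\begin{equation*}
\|B^{\dagger}-A^{\dagger}\|_{F}^{2}=\|A^{\dagger}\|_{F}^{2}+\|B^{\dagger}\|_{F}^{2}-2\Retr\big((A^{\dagger})^{\ast}B^{\dagger}\big),
\end{equation*}
and from the SVD representations~\eqref{MP-A} and~\eqref{MP-B} the first two terms are nothing but $\|A^{\dagger}\|_{F}^{2}=\sum_{i=1}^{r}\sigma_{i}^{-2}$ and $\|B^{\dagger}\|_{F}^{2}=\sum_{i=1}^{s}\widetilde{\sigma}_{i}^{-2}$, which already depend only on the singular values. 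Hence the entire theorem reduces to a two-sided estimate of the number $\Retr((A^{\dagger})^{\ast}B^{\dagger})$.

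To estimate that cross term I would apply the von Neumann trace inequality~\eqref{von} to $(A^{\dagger})^{\ast}$ and $(B^{\dagger})^{\ast}$, whose singular values coincide with those of $A^{\dagger}$ and $B^{\dagger}$, namely the sets $\{\sigma_{i}^{-1}\}_{i=1}^{r}$ and $\{\widetilde{\sigma}_{i}^{-1}\}_{i=1}^{s}$ together with the appropriate number of zeros. Taking the identity unitaries in~\eqref{von} yields $\Retr((A^{\dagger})^{\ast}B^{\dagger})\leq\sum_{i=1}^{t}\sigma_{i}(A^{\dagger})\sigma_{i}(B^{\dagger})$, and replacing one unitary by its negative gives the matching lower estimate $\Retr((A^{\dagger})^{\ast}B^{\dagger})\geq-\sum_{i=1}^{t}\sigma_{i}(A^{\dagger})\sigma_{i}(B^{\dagger})$. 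Substituting the upper estimate into the identity produces the desired lower bound for $\|B^{\dagger}-A^{\dagger}\|_{F}^{2}$, while the lower estimate produces the upper bound; the two opposite signs are exactly what convert the paired reciprocals into the squares $(\sigma_{i}^{-1}\mp\widetilde{\sigma}_{i}^{-1})^{2}$ once the terms are regrouped.

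It then remains to evaluate $\sum_{i=1}^{t}\sigma_{i}(A^{\dagger})\sigma_{i}(B^{\dagger})$ explicitly. Because $A^{\dagger}$ has $r$ nonzero singular values and $B^{\dagger}$ has $s$, precisely $\min\{r,s\}$ of the products are nonzero; this is the point at which the dichotomy $s\leq r$ versus $s>r$ appears, with the $|r-s|$ unmatched reciprocals surfacing as the bare tail $\sum\sigma_{i}^{-2}$ (respectively $\sum\widetilde{\sigma}_{i}^{-2}$). The case $s>r$ in~\eqref{up+low2} can moreover be deduced from the case $s\leq r$ in~\eqref{up+low1} through the symmetry $\|B^{\dagger}-A^{\dagger}\|_{F}=\|A^{\dagger}-B^{\dagger}\|_{F}$, i.e.\ by interchanging the roles of $A$ and $B$.

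The delicate step---and the one I expect to be the main obstacle---is the bookkeeping in this product count. Von Neumann's inequality requires both spectra to be listed in the same monotone order, yet the reciprocals $\sigma_{i}^{-1}$ and $\widetilde{\sigma}_{i}^{-1}$ run opposite to $\sigma_{i}$ and $\widetilde{\sigma}_{i}$, and the padding zeros of the shorter spectrum must be aligned against the correct end of the longer one. Pinning down exactly which reciprocals pair up, and which are left over, is what determines the indices appearing in the paired sum and in the residual tail; once this alignment is settled, completing the square and collecting terms is routine and yields~\eqref{up+low1} and~\eqref{up+low2}.
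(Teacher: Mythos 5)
Your plan is exactly the paper's own proof in outline: expand $\|B^{\dagger}-A^{\dagger}\|_{F}^{2}=\|A^{\dagger}\|_{F}^{2}+\|B^{\dagger}\|_{F}^{2}-2\Retr(A^{\dagger}(B^{\dagger})^{\ast})$ and control the cross term on both sides via \eqref{von} (this is the paper's \eqref{trace} and \eqref{real}). You also put your finger on precisely the right pressure point, the alignment of the padded reciprocal spectra; but your final assertion---that once the alignment is settled the regrouping ``yields \eqref{up+low1} and \eqref{up+low2}''---is where the argument breaks, because the sorted pairing demanded by \eqref{von} does \emph{not} match $\sigma_{i}^{-1}$ with $\widetilde{\sigma}_{i}^{-1}$ when $s\neq r$. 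Arranged decreasingly, the singular values of $A^{\dagger}$ are $\sigma_{r}^{-1}\geq\cdots\geq\sigma_{1}^{-1}$ followed by zeros, and those of $B^{\dagger}$ are $\widetilde{\sigma}_{s}^{-1}\geq\cdots\geq\widetilde{\sigma}_{1}^{-1}$ followed by zeros, so for $s\leq r$ von Neumann gives
\begin{displaymath}
\big|\Retr\big(A^{\dagger}(B^{\dagger})^{\ast}\big)\big|\leq\sum_{j=1}^{s}\frac{1}{\sigma_{r-s+j}\,\widetilde{\sigma}_{j}}:
\end{displaymath}
the \emph{smallest} positive singular values of $A$ and $B$ are paired, and the padding zeros of $B^{\dagger}$ annihilate the reciprocals of the \emph{largest} $\sigma_{i}$. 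Completing the square therefore produces $\sum_{j=1}^{s}\big(\sigma_{r-s+j}^{-1}\mp\widetilde{\sigma}_{j}^{-1}\big)^{2}+\sum_{i=1}^{r-s}\sigma_{i}^{-2}$, with the bare tail over $i=1,\ldots,r-s$, not over $i=s+1,\ldots,r$ as displayed in \eqref{up+low1}.

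This is not a cosmetic indexing issue: the bounds as printed actually fail for $s<r$. Take $A=\diag(2,1)$ (so $\sigma_{1}=2$, $\sigma_{2}=1$) and $B=\diag(0,3)$ (so $s=1$, $\widetilde{\sigma}_{1}=3$); then $\|B^{\dagger}-A^{\dagger}\|_{F}^{2}=\tfrac{1}{4}+\tfrac{4}{9}=\tfrac{25}{36}$, strictly below the left side $(\tfrac{1}{2}-\tfrac{1}{3})^{2}+1=\tfrac{37}{36}$ of \eqref{up+low1}; with $B=\diag(0,-3)$ one gets $\tfrac{73}{36}$, strictly above the right side $(\tfrac{1}{2}+\tfrac{1}{3})^{2}+1=\tfrac{61}{36}$. (The correctly aligned bounds $\tfrac{25}{36}$ and $\tfrac{73}{36}$ are attained in these examples.) You should know that the paper's own proof commits exactly the misalignment you flagged: its inequality \eqref{real} pairs by original index rather than by sorted order, and the first example violates it, since there $\Retr(A^{\dagger}(B^{\dagger})^{\ast})=\tfrac{1}{3}>\tfrac{1}{6}=\tfrac{1}{\sigma_{1}\widetilde{\sigma}_{1}}$. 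When $s=r$ the index pairing and the sorted pairing coincide (both reciprocal sequences increase in $i$, with no padding mismatch), the tails vanish, and your argument---including the sign flip $U\mapsto-U$ for the lower trace estimate and the $A\leftrightarrow B$ symmetry reduction of case (ii) to case (i)---goes through verbatim. So carrying out your deferred bookkeeping honestly proves a correct and sharp theorem, but with the pairing $\sigma_{r-s+j}\leftrightarrow\widetilde{\sigma}_{j}$ and tail $\sum_{i=1}^{r-s}\sigma_{i}^{-2}$ (mirrored for $s>r$), not the statement \eqref{up+low1}--\eqref{up+low2} as printed.
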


\begin{proof}
Observe first that
\begin{equation}\label{trace}
\|B^{\dagger}-A^{\dagger}\|_{F}^{2}=\tr\big((B^{\dagger}-A^{\dagger})^{\ast}(B^{\dagger}-A^{\dagger})\big)=\sum_{i=1}^{r}\frac{1}{\sigma_{i}^{2}}+\sum_{i=1}^{s}\frac{1}{\widetilde{\sigma}_{i}^{2}}-2\Retr\big(A^{\dagger}(B^{\dagger})^{\ast}\big).
\end{equation}
From~\eqref{von}, we deduce that
\begin{equation}\label{real}
-\sum_{i=1}^{\min\{s,r\}}\frac{1}{\sigma_{i}\widetilde{\sigma}_{i}}\leq\Retr\big(A^{\dagger}(B^{\dagger})^{\ast}\big)\leq\sum_{i=1}^{\min\{s,r\}}\frac{1}{\sigma_{i}\widetilde{\sigma}_{i}}.
\end{equation}
Combining~\eqref{trace} and~\eqref{real}, we can obtain the inequalities~\eqref{up+low1} and~\eqref{up+low2}.
\end{proof}

\begin{remark}\rm
According to the lower bounds in~\eqref{up+low1} and~\eqref{up+low2}, we conclude that a necessary condition for $\lim\limits_{B\rightarrow A}B^{\dagger}=A^{\dagger}$ ($B$ is viewed as a variable) is that $\rank(B)=\rank(A)$ always holds as $B$ tends to $A$. In fact, it is also a sufficient condition for $\lim\limits_{B\rightarrow A}B^{\dagger}=A^{\dagger}$~\cite{Stewart1969}.
\end{remark}

\begin{remark}\rm
Under the setting of~\eqref{Ex0}, the lower and upper bounds in~\eqref{up+low2} are $4\tau^{2}+\frac{1}{\tau^{2}}$ and $4(1+\tau)^{2}+\frac{1}{\tau^{2}}$, respectively. Clearly, the lower bound has attained $\|B^{\dagger}-A^{\dagger}\|_{F}^{2}$, and the upper bound will be very tight when $\tau$ is small.
\end{remark}

In what follows, we establish some upper bounds involving the perturbation $E=B-A$.

\begin{theorem}\label{up-thm1}
Let $A\in\mathbb{C}^{m\times n}_{r}$, $B\in\mathbb{C}^{m\times n}_{s}$, and $E=B-A$. Then
\begin{equation}\label{up1}
\|B^{\dagger}-A^{\dagger}\|_{F}^{2}\leq\min\big\{\alpha_{1}+\|B^{\dagger}EA^{\dagger}\|_{F}^{2},\alpha_{2}+\|A^{\dagger}EB^{\dagger}\|_{F}^{2}\big\},
\end{equation}
where
\begin{align*}
&\alpha_{1}:=\|A^{\dagger}\|_{2}^{2}\big(\|A^{\dagger}E\|_{F}^{2}-\|A^{\dagger}EB^{\dagger}B\|_{F}^{2}\big)+\|B^{\dagger}\|_{2}^{2}\big(\|EB^{\dagger}\|_{F}^{2}-\|AA^{\dagger}EB^{\dagger}\|_{F}^{2}\big),\\
&\alpha_{2}:=\|A^{\dagger}\|_{2}^{2}\big(\|EA^{\dagger}\|_{F}^{2}-\|BB^{\dagger}EA^{\dagger}\|_{F}^{2}\big)+\|B^{\dagger}\|_{2}^{2}\big(\|B^{\dagger}E\|_{F}^{2}-\|B^{\dagger}EA^{\dagger}A\|_{F}^{2}\big).
\end{align*}
\end{theorem}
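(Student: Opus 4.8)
The plan is to start from the two upper bounds already furnished by the Corollary, namely~\eqref{upE+1.1} and~\eqref{upE+1.2}, and to show that the purely geometric quantities appearing there—such as $\|\widetilde{U}_{1}^{\ast}U_{2}\|_{F}^{2}$ and $\|\widetilde{V}_{2}^{\ast}V_{1}\|_{F}^{2}$—can be rewritten \emph{exactly} as the differences of Frobenius norms that constitute $\alpha_{1}$ and $\alpha_{2}$. Concretely, I would establish the four identities
\begin{align*}
&\|\widetilde{V}_{2}^{\ast}V_{1}\|_{F}^{2}=\|A^{\dagger}E\|_{F}^{2}-\|A^{\dagger}EB^{\dagger}B\|_{F}^{2}, \qquad \|\widetilde{U}_{1}^{\ast}U_{2}\|_{F}^{2}=\|EB^{\dagger}\|_{F}^{2}-\|AA^{\dagger}EB^{\dagger}\|_{F}^{2},\\
&\|\widetilde{U}_{2}^{\ast}U_{1}\|_{F}^{2}=\|EA^{\dagger}\|_{F}^{2}-\|BB^{\dagger}EA^{\dagger}\|_{F}^{2}, \qquad \|\widetilde{V}_{1}^{\ast}V_{2}\|_{F}^{2}=\|B^{\dagger}E\|_{F}^{2}-\|B^{\dagger}EA^{\dagger}A\|_{F}^{2}.
\end{align*}
Substituting the first two into~\eqref{upE+1.1} then gives $\|B^{\dagger}-A^{\dagger}\|_{F}^{2}\leq\alpha_{1}+\|B^{\dagger}EA^{\dagger}\|_{F}^{2}$, substituting the last two into~\eqref{upE+1.2} gives $\|B^{\dagger}-A^{\dagger}\|_{F}^{2}\leq\alpha_{2}+\|A^{\dagger}EB^{\dagger}\|_{F}^{2}$, and taking the minimum of the two yields~\eqref{up1}.

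The mechanism behind each identity is a Pythagorean decomposition against an orthogonal projection. From the SVDs~\eqref{SVD-A}--\eqref{MP-B} we read off that $A^{\dagger}A=V_{1}V_{1}^{\ast}$, $AA^{\dagger}=U_{1}U_{1}^{\ast}$, $B^{\dagger}B=\widetilde{V}_{1}\widetilde{V}_{1}^{\ast}$, and $BB^{\dagger}=\widetilde{U}_{1}\widetilde{U}_{1}^{\ast}$ are all Hermitian idempotents, with complementary projections $I_{n}-B^{\dagger}B=\widetilde{V}_{2}\widetilde{V}_{2}^{\ast}$, $I_{m}-AA^{\dagger}=U_{2}U_{2}^{\ast}$, and so on. For the first identity I would split $A^{\dagger}E=A^{\dagger}E\,B^{\dagger}B+A^{\dagger}E\,(I_{n}-B^{\dagger}B)$; the Frobenius cross term vanishes because, by the cyclic property of the trace and $(B^{\dagger}B)(I_{n}-B^{\dagger}B)=0$, one has $\tr\big((B^{\dagger}B)(A^{\dagger}E)^{\ast}A^{\dagger}E(I_{n}-B^{\dagger}B)\big)=0$. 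Hence $\|A^{\dagger}E\|_{F}^{2}-\|A^{\dagger}EB^{\dagger}B\|_{F}^{2}=\|A^{\dagger}E(I_{n}-B^{\dagger}B)\|_{F}^{2}=\|A^{\dagger}E\widetilde{V}_{2}\|_{F}^{2}$, using that $\widetilde{V}_{2}$ has orthonormal columns.

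The computation is then closed by the vanishing products forced by the SVD structure, namely $B\widetilde{V}_{2}=0$, $AV_{2}=0$, $U_{2}^{\ast}A=0$, and $\widetilde{U}_{2}^{\ast}B=0$. For instance, $A^{\dagger}E\widetilde{V}_{2}=A^{\dagger}(B-A)\widetilde{V}_{2}=-A^{\dagger}A\widetilde{V}_{2}=-V_{1}V_{1}^{\ast}\widetilde{V}_{2}$, so $\|A^{\dagger}E\widetilde{V}_{2}\|_{F}^{2}=\|V_{1}^{\ast}\widetilde{V}_{2}\|_{F}^{2}=\|\widetilde{V}_{2}^{\ast}V_{1}\|_{F}^{2}$, which is precisely the first identity. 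The remaining three follow by the same two-step recipe—decompose against the relevant projection, then annihilate one summand using orthogonality of singular vectors—with left and right multiplication interchanged as appropriate (for the second identity, e.g., one decomposes on the left against $AA^{\dagger}$, arrives at $\|U_{2}^{\ast}EB^{\dagger}\|_{F}^{2}$, and uses $U_{2}^{\ast}A=0$ to reduce $U_{2}^{\ast}EB^{\dagger}=U_{2}^{\ast}BB^{\dagger}=U_{2}^{\ast}\widetilde{U}_{1}\widetilde{U}_{1}^{\ast}$). I do not expect a genuine obstacle: the only point demanding care is bookkeeping—tracking which projection acts on which side and invoking $\|MP\|_{F}=\|M\widetilde{V}_{2}\|_{F}$-type reductions only when the relevant factor ($\widetilde{V}_{2}$, $U_{2}$, etc.) has orthonormal columns—so the argument amounts to a short sequence of trace manipulations rather than a single hard estimate.
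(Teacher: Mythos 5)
Your proposal is correct and follows essentially the same route as the paper: both arguments rest on the corollary's bounds \eqref{upE+1.1}--\eqref{upE+1.2} combined with the identities $\|\widetilde{U}_{1}^{\ast}U_{2}\|_{F}^{2}=\|EB^{\dagger}\|_{F}^{2}-\|AA^{\dagger}EB^{\dagger}\|_{F}^{2}$ and $\|\widetilde{V}_{2}^{\ast}V_{1}\|_{F}^{2}=\|A^{\dagger}E\|_{F}^{2}-\|A^{\dagger}EB^{\dagger}B\|_{F}^{2}$, which the paper verifies by writing $U^{\ast}EB^{\dagger}\widetilde{U}$, $U^{\ast}AA^{\dagger}EB^{\dagger}\widetilde{U}$, $V^{\ast}A^{\dagger}E\widetilde{V}$, and $V^{\ast}A^{\dagger}EB^{\dagger}B\widetilde{V}$ in block form and comparing blocks --- the coordinates-based equivalent of your Pythagorean splitting against the projections $AA^{\dagger}$ and $B^{\dagger}B$. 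The only cosmetic difference is that the paper obtains the $\alpha_{2}$ bound by interchanging the roles of $A$ and $B$ in the first bound, whereas you prove the two additional identities for $\|\widetilde{U}_{2}^{\ast}U_{1}\|_{F}^{2}$ and $\|\widetilde{V}_{1}^{\ast}V_{2}\|_{F}^{2}$ directly and substitute them into \eqref{upE+1.2}.
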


\begin{proof}
By~\eqref{SVD-A}, \eqref{SVD-B}, \eqref{MP-A}, and~\eqref{MP-B}, we have
\begin{align*}
&U^{\ast}EB^{\dagger}\widetilde{U}=\begin{pmatrix}
U_{1}^{\ast}\widetilde{U}_{1}-\Sigma_{1}V_{1}^{\ast}\widetilde{V}_{1}\widetilde{\Sigma}_{1}^{-1} & 0 \\
U_{2}^{\ast}\widetilde{U}_{1} & 0
\end{pmatrix},\\
&U^{\ast}AA^{\dagger}EB^{\dagger}\widetilde{U}=\begin{pmatrix}
U_{1}^{\ast}\widetilde{U}_{1}-\Sigma_{1}V_{1}^{\ast}\widetilde{V}_{1}\widetilde{\Sigma}_{1}^{-1} & 0 \\
0 & 0
\end{pmatrix}.
\end{align*}
Hence,
\begin{equation}\label{U1starU2}
\|\widetilde{U}_{1}^{\ast}U_{2}\|_{F}^{2}=\|EB^{\dagger}\|_{F}^{2}-\|AA^{\dagger}EB^{\dagger}\|_{F}^{2}.
\end{equation}
Similarly, we have
\begin{align*}
&V^{\ast}A^{\dagger}E\widetilde{V}=\begin{pmatrix}
\Sigma_{1}^{-1}U_{1}^{\ast}\widetilde{U}_{1}\widetilde{\Sigma}_{1}-V_{1}^{\ast}\widetilde{V}_{1} & -V_{1}^{\ast}\widetilde{V}_{2} \\
0 & 0
\end{pmatrix},\\
&V^{\ast}A^{\dagger}EB^{\dagger}B\widetilde{V}=\begin{pmatrix}
\Sigma_{1}^{-1}U_{1}^{\ast}\widetilde{U}_{1}\widetilde{\Sigma}_{1}-V_{1}^{\ast}\widetilde{V}_{1} & 0 \\
0 & 0
\end{pmatrix}.
\end{align*}
Thus,
\begin{equation}\label{V2starV1}
\|\widetilde{V}_{2}^{\ast}V_{1}\|_{F}^{2}=\|A^{\dagger}E\|_{F}^{2}-\|A^{\dagger}EB^{\dagger}B\|_{F}^{2}.
\end{equation}
Using~\eqref{upE+1.1}, \eqref{U1starU2}, and~\eqref{V2starV1}, we obtain
\begin{equation}\label{part1.1}
\|B^{\dagger}-A^{\dagger}\|_{F}^{2}\leq\alpha_{1}+\|B^{\dagger}EA^{\dagger}\|_{F}^{2}.
\end{equation}
Interchanging the roles of $A$ and $B$ yields
\begin{equation}\label{part1.2}
\|B^{\dagger}-A^{\dagger}\|_{F}^{2}\leq\alpha_{2}+\|A^{\dagger}EB^{\dagger}\|_{F}^{2}.
\end{equation}
The desired result~\eqref{up1} follows immediately by combining~\eqref{part1.1} and~\eqref{part1.2}.
\end{proof}

\begin{remark}\rm
If $A\in\mathbb{C}^{m\times n}_{n}$ $(m\geq n)$, then $V_{1}=V$ and $V_{2}$ vanishes. In this case, we have
\begin{align*}
&\alpha_{1}=\|B^{\dagger}\|_{2}^{2}\big(\|EB^{\dagger}\|_{F}^{2}-\|AA^{\dagger}EB^{\dagger}\|_{F}^{2}\big)+(n-s)\|A^{\dagger}\|_{2}^{2},\\
&\alpha_{2}=\|A^{\dagger}\|_{2}^{2}\big(\|EA^{\dagger}\|_{F}^{2}-\|BB^{\dagger}EA^{\dagger}\|_{F}^{2}\big).
\end{align*}
Note that
\begin{displaymath}
\|AA^{\dagger}EB^{\dagger}\|_{F}^{2}\geq\frac{\|A^{\dagger}EB^{\dagger}\|_{F}^{2}}{\|A^{\dagger}\|_{2}^{2}} \quad \text{and} \quad \|BB^{\dagger}EA^{\dagger}\|_{F}^{2}\geq\frac{\|B^{\dagger}EA^{\dagger}\|_{F}^{2}}{\|B^{\dagger}\|_{2}^{2}}.
\end{displaymath}
In light of~\eqref{up1}, we have
\begin{align*}
\|B^{\dagger}-A^{\dagger}\|_{F}^{2}&\leq\frac{\|A^{\dagger}\|_{2}^{2}\|B^{\dagger}\|_{2}^{2}}{\|A^{\dagger}\|_{2}^{2}+\|B^{\dagger}\|_{2}^{2}}\bigg(\frac{\alpha_{1}+\|B^{\dagger}EA^{\dagger}\|_{F}^{2}}{\|B^{\dagger}\|_{2}^{2}}+\frac{\alpha_{2}+\|A^{\dagger}EB^{\dagger}\|_{F}^{2}}{\|A^{\dagger}\|_{2}^{2}}\bigg)\\
&\leq\frac{\|A^{\dagger}\|_{2}^{2}\|B^{\dagger}\|_{2}^{2}}{\|A^{\dagger}\|_{2}^{2}+\|B^{\dagger}\|_{2}^{2}}\bigg(\|EA^{\dagger}\|_{F}^{2}+\|EB^{\dagger}\|_{F}^{2}+(n-s)\frac{\|A^{\dagger}\|_{2}^{2}}{\|B^{\dagger}\|_{2}^{2}}\bigg).
\end{align*}
Therefore, \eqref{up1} has improved the estimate~\eqref{Li2.1}.
\end{remark}

\begin{remark}\rm
If $A\in\mathbb{C}^{m\times n}_{n}$ and $B\in\mathbb{C}^{m\times n}_{n}$ $(m\geq n)$, then
\begin{align*}
&\alpha_{1}=\|B^{\dagger}\|_{2}^{2}\big(\|EB^{\dagger}\|_{F}^{2}-\|AA^{\dagger}EB^{\dagger}\|_{F}^{2}\big),\\ &\alpha_{2}=\|A^{\dagger}\|_{2}^{2}\big(\|EA^{\dagger}\|_{F}^{2}-\|BB^{\dagger}EA^{\dagger}\|_{F}^{2}\big).
\end{align*}
Due to $\rank(B)=\rank(A)$, it follows that
\begin{displaymath}
\|EB^{\dagger}\|_{F}^{2}-\|AA^{\dagger}EB^{\dagger}\|_{F}^{2}=\|\widetilde{U}_{1}^{\ast}U_{2}\|_{F}^{2}=\|\widetilde{U}_{2}^{\ast}U_{1}\|_{F}^{2}=\|EA^{\dagger}\|_{F}^{2}-\|BB^{\dagger}EA^{\dagger}\|_{F}^{2},
\end{displaymath}
where we have used Lemma~\ref{s=r}. Then
\begin{align*}
&\alpha_{1}=\|B^{\dagger}\|_{2}^{2}\big(\|EA^{\dagger}\|_{F}^{2}-\|BB^{\dagger}EA^{\dagger}\|_{F}^{2}\big)\leq\|B^{\dagger}\|_{2}^{2}\|EA^{\dagger}\|_{F}^{2}-\|B^{\dagger}EA^{\dagger}\|_{F}^{2},\\
&\alpha_{2}=\|A^{\dagger}\|_{2}^{2}\big(\|EB^{\dagger}\|_{F}^{2}-\|AA^{\dagger}EB^{\dagger}\|_{F}^{2}\big)\leq\|A^{\dagger}\|_{2}^{2}\|EB^{\dagger}\|_{F}^{2}-\|A^{\dagger}EB^{\dagger}\|_{F}^{2},
\end{align*}
because
\begin{displaymath}
\|BB^{\dagger}EA^{\dagger}\|_{F}^{2}\geq\frac{\|B^{\dagger}EA^{\dagger}\|_{F}^{2}}{\|B^{\dagger}\|_{2}^{2}} \quad \text{and} \quad \|AA^{\dagger}EB^{\dagger}\|_{F}^{2}\geq\frac{\|A^{\dagger}EB^{\dagger}\|_{F}^{2}}{\|A^{\dagger}\|_{2}^{2}}.
\end{displaymath}
By~\eqref{up1}, we have
\begin{displaymath}
\|B^{\dagger}-A^{\dagger}\|_{F}^{2}\leq\min\big\{\|B^{\dagger}\|_{2}^{2}\|EA^{\dagger}\|_{F}^{2},\|A^{\dagger}\|_{2}^{2}\|EB^{\dagger}\|_{F}^{2}\big\},
\end{displaymath}
which is exactly~\eqref{Li2.2}. Thus, \eqref{up1} has also improved the estimate~\eqref{Li2.2}.
\end{remark}

The following two corollaries are based on Theorem~\ref{up-thm1}.

\begin{corollary}\label{corup1}
Under the assumptions of Theorem~{\rm\ref{up-thm1}}, it holds that
\begin{equation}\label{corup1.1}
\|B^{\dagger}-A^{\dagger}\|_{F}^{2}\leq\min\big\{\beta_{1}+\|B^{\dagger}EA^{\dagger}\|_{F}^{2},\beta_{2}+\|A^{\dagger}EB^{\dagger}\|_{F}^{2}\big\},
\end{equation}
where
\begin{align*}
&\beta_{1}:=\|A^{\dagger}\|_{2}^{4}\big(\|E\|_{F}^{2}-\|EB^{\dagger}B\|_{F}^{2}\big)+\|B^{\dagger}\|_{2}^{4}\big(\|E\|_{F}^{2}-\|AA^{\dagger}E\|_{F}^{2}\big),\\
&\beta_{2}:=\|A^{\dagger}\|_{2}^{4}\big(\|E\|_{F}^{2}-\|BB^{\dagger}E\|_{F}^{2}\big)+\|B^{\dagger}\|_{2}^{4}\big(\|E\|_{F}^{2}-\|EA^{\dagger}A\|_{F}^{2}\big).
\end{align*}
\end{corollary}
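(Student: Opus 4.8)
The plan is to derive this corollary directly from Theorem~\ref{up-thm1} by proving the two majorizations $\alpha_{1}\leq\beta_{1}$ and $\alpha_{2}\leq\beta_{2}$; once these are established, the bound~\eqref{corup1.1} follows at once by substituting them into the two branches of the minimum in~\eqref{up1}. The key structural observation I would exploit is that $B^{\dagger}B$, $AA^{\dagger}$, $A^{\dagger}A$, and $BB^{\dagger}$ are all orthogonal projections (Hermitian idempotents, by Penrose equations (i)--(iv)), so every difference appearing in $\alpha_{1}$ and $\alpha_{2}$ is actually a single squared Frobenius norm of a ``complementary'' term.

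Concretely, for any matrix $X$ and any orthogonal projection $P$ one has the Pythagorean identity $\|X\|_{F}^{2}=\|XP\|_{F}^{2}+\|X(I-P)\|_{F}^{2}$, since the cross term vanishes under the trace because $(I-P)P=0$; the analogue with left multiplication, $\|X\|_{F}^{2}=\|PX\|_{F}^{2}+\|(I-P)X\|_{F}^{2}$, holds for the same reason. Applying this with $P=B^{\dagger}B$ and $X=A^{\dagger}E$ gives $\|A^{\dagger}E\|_{F}^{2}-\|A^{\dagger}EB^{\dagger}B\|_{F}^{2}=\|A^{\dagger}E(I-B^{\dagger}B)\|_{F}^{2}$, and applying it with $P=AA^{\dagger}$ and $X=EB^{\dagger}$ gives $\|EB^{\dagger}\|_{F}^{2}-\|AA^{\dagger}EB^{\dagger}\|_{F}^{2}=\|(I-AA^{\dagger})EB^{\dagger}\|_{F}^{2}$.

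The next step is to extract a spectral-norm factor using submultiplicativity, $\|YZ\|_{F}\leq\|Y\|_{2}\|Z\|_{F}$ and $\|YZ\|_{F}\leq\|Y\|_{F}\|Z\|_{2}$. From the first identity, factoring $A^{\dagger}$ on the left yields $\|A^{\dagger}E(I-B^{\dagger}B)\|_{F}^{2}\leq\|A^{\dagger}\|_{2}^{2}\|E(I-B^{\dagger}B)\|_{F}^{2}=\|A^{\dagger}\|_{2}^{2}(\|E\|_{F}^{2}-\|EB^{\dagger}B\|_{F}^{2})$, where the last equality reuses the Pythagorean identity in reverse. From the second identity, factoring $B^{\dagger}$ on the right yields $\|(I-AA^{\dagger})EB^{\dagger}\|_{F}^{2}\leq\|B^{\dagger}\|_{2}^{2}\|(I-AA^{\dagger})E\|_{F}^{2}=\|B^{\dagger}\|_{2}^{2}(\|E\|_{F}^{2}-\|AA^{\dagger}E\|_{F}^{2})$. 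Multiplying these by $\|A^{\dagger}\|_{2}^{2}$ and $\|B^{\dagger}\|_{2}^{2}$ respectively and summing gives precisely $\alpha_{1}\leq\beta_{1}$. Interchanging the roles of $A$ and $B$ (i.e., the same argument with $P=BB^{\dagger}$, $X=EA^{\dagger}$, and with $P=A^{\dagger}A$, $X=B^{\dagger}E$) gives $\alpha_{2}\leq\beta_{2}$.

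I expect no serious obstacle: the whole argument is bookkeeping with orthogonal-projection identities plus one submultiplicative estimate, and it does not even require the SVD machinery of Lemma~\ref{identity}. The only point demanding care is choosing on which side to split off the projection, and correspondingly whether to apply $\|YZ\|_{F}\leq\|Y\|_{2}\|Z\|_{F}$ or $\|YZ\|_{F}\leq\|Y\|_{F}\|Z\|_{2}$, so that the extracted constant is genuinely $\|A^{\dagger}\|_{2}^{2}$ or $\|B^{\dagger}\|_{2}^{2}$ and the residual Frobenius norm lands on the pattern prescribed by $\beta_{1}$ and $\beta_{2}$. With $\alpha_{1}\leq\beta_{1}$ and $\alpha_{2}\leq\beta_{2}$ secured, the estimate~\eqref{up1} immediately upgrades to~\eqref{corup1.1}.
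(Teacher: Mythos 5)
Your proof is correct, and while it arrives at the same pivotal inequalities $\alpha_{1}\leq\beta_{1}$ and $\alpha_{2}\leq\beta_{2}$ before invoking~\eqref{up1}, the mechanism is genuinely different from the paper's. The paper works inside the SVD frames of $A$ and $B$: from the block forms~\eqref{E1}, \eqref{EB+B}, and~\eqref{AA+E} it reads off the exact identities $\|\widetilde{V}_{2}^{\ast}V_{1}\Sigma_{1}\|_{F}^{2}=\|E\|_{F}^{2}-\|EB^{\dagger}B\|_{F}^{2}$ and $\|\widetilde{\Sigma}_{1}\widetilde{U}_{1}^{\ast}U_{2}\|_{F}^{2}=\|E\|_{F}^{2}-\|AA^{\dagger}E\|_{F}^{2}$, identifies the differences in $\alpha_{1}$ with $\|\widetilde{V}_{2}^{\ast}V_{1}\|_{F}^{2}$ and $\|\widetilde{U}_{1}^{\ast}U_{2}\|_{F}^{2}$ via~\eqref{U1starU2} and~\eqref{V2starV1} from the proof of Theorem~\ref{up-thm1}, and then peels off $\|\Sigma_{1}^{-1}\|_{2}=\|A^{\dagger}\|_{2}$ (resp.\ $\|\widetilde{\Sigma}_{1}^{-1}\|_{2}=\|B^{\dagger}\|_{2}$) through $\|\widetilde{V}_{2}^{\ast}V_{1}\|_{F}\leq\|A^{\dagger}\|_{2}\|\widetilde{V}_{2}^{\ast}V_{1}\Sigma_{1}\|_{F}$. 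You bypass the SVD entirely: since $AA^{\dagger}$, $A^{\dagger}A$, $BB^{\dagger}$, $B^{\dagger}B$ are orthogonal projections, the Pythagorean identity $\|X\|_{F}^{2}=\|XP\|_{F}^{2}+\|X(I-P)\|_{F}^{2}$ converts each difference into a single squared norm, and one correctly-sided application of $\|YZ\|_{F}\leq\|Y\|_{2}\|Z\|_{F}$ plus Pythagoras in reverse yields the same bounds. The two computations are in fact secretly identical---for instance $E(I-B^{\dagger}B)=-A(I-B^{\dagger}B)$, so $\|A^{\dagger}E(I-B^{\dagger}B)\|_{F}=\|\widetilde{V}_{2}^{\ast}V_{1}\|_{F}$ and $\|E(I-B^{\dagger}B)\|_{F}=\|\widetilde{V}_{2}^{\ast}V_{1}\Sigma_{1}\|_{F}$---but your packaging buys a short, self-contained argument needing only the Penrose axioms, whereas the paper's SVD block identities buy exact equalities that are reused verbatim in Theorem~\ref{up-thm2} and throughout the lower-bound section, so its heavier setup is amortized across several results.
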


\begin{proof}
In view of~\eqref{SVD-A}, \eqref{SVD-B}, \eqref{MP-A}, and~\eqref{MP-B}, we have
\begin{align}
&U^{\ast}E\widetilde{V}=\begin{pmatrix}
U_{1}^{\ast}\widetilde{U}_{1}\widetilde{\Sigma}_{1}-\Sigma_{1}V_{1}^{\ast}\widetilde{V}_{1} & -\Sigma_{1}V_{1}^{\ast}\widetilde{V}_{2} \\
U_{2}^{\ast}\widetilde{U}_{1}\widetilde{\Sigma}_{1} & 0
\end{pmatrix},\label{E1}\\
&U^{\ast}EB^{\dagger}B\widetilde{V}=\begin{pmatrix}
U_{1}^{\ast}\widetilde{U}_{1}\widetilde{\Sigma}_{1}-\Sigma_{1}V_{1}^{\ast}\widetilde{V}_{1} & 0 \\
U_{2}^{\ast}\widetilde{U}_{1}\widetilde{\Sigma}_{1} & 0
\end{pmatrix},\label{EB+B}\\
&U^{\ast}AA^{\dagger}E\widetilde{V}=\begin{pmatrix}
U_{1}^{\ast}\widetilde{U}_{1}\widetilde{\Sigma}_{1}-\Sigma_{1}V_{1}^{\ast}\widetilde{V}_{1} & -\Sigma_{1}V_{1}^{\ast}\widetilde{V}_{2} \\
0 & 0
\end{pmatrix}.\label{AA+E}
\end{align}
From~\eqref{E1}, \eqref{EB+B}, and~\eqref{AA+E}, we deduce that
\begin{align*}
&\|\widetilde{V}_{2}^{\ast}V_{1}\Sigma_{1}\|_{F}^{2}=\|E\|_{F}^{2}-\|EB^{\dagger}B\|_{F}^{2},\\ &\|\widetilde{\Sigma}_{1}\widetilde{U}_{1}^{\ast}U_{2}\|_{F}^{2}=\|E\|_{F}^{2}-\|AA^{\dagger}E\|_{F}^{2}.
\end{align*}
Then
\begin{align*}
&\|A^{\dagger}E\|_{F}^{2}-\|A^{\dagger}EB^{\dagger}B\|_{F}^{2}=\|\widetilde{V}_{2}^{\ast}V_{1}\|_{F}^{2}\leq\|A^{\dagger}\|_{2}^{2}\|\widetilde{V}_{2}^{\ast}V_{1}\Sigma_{1}\|_{F}^{2}=\|A^{\dagger}\|_{2}^{2}\big(\|E\|_{F}^{2}-\|EB^{\dagger}B\|_{F}^{2}\big),\\
&\|EB^{\dagger}\|_{F}^{2}-\|AA^{\dagger}EB^{\dagger}\|_{F}^{2}=\|\widetilde{U}_{1}^{\ast}U_{2}\|_{F}^{2}\leq\|B^{\dagger}\|_{2}^{2}\|\widetilde{\Sigma}_{1}\widetilde{U}_{1}^{\ast}U_{2}\|_{F}^{2}=\|B^{\dagger}\|_{2}^{2}\big(\|E\|_{F}^{2}-\|AA^{\dagger}E\|_{F}^{2}\big).
\end{align*}
Hence,
\begin{displaymath}
\alpha_{1}\leq\|A^{\dagger}\|_{2}^{4}\big(\|E\|_{F}^{2}-\|EB^{\dagger}B\|_{F}^{2}\big)+\|B^{\dagger}\|_{2}^{4}\big(\|E\|_{F}^{2}-\|AA^{\dagger}E\|_{F}^{2}\big).
\end{displaymath}
Similarly,
\begin{displaymath}
\alpha_{2}\leq\|A^{\dagger}\|_{2}^{4}\big(\|E\|_{F}^{2}-\|BB^{\dagger}E\|_{F}^{2}\big)+\|B^{\dagger}\|_{2}^{4}\big(\|E\|_{F}^{2}-\|EA^{\dagger}A\|_{F}^{2}\big).
\end{displaymath}
Using~\eqref{up1}, we can obtain the estimate~\eqref{corup1.1} immediately.
\end{proof}

\begin{corollary}\label{corup2}
Under the assumptions of Theorem~{\rm\ref{up-thm1}}, it holds that
\begin{equation}\label{corup1.2}
\|B^{\dagger}-A^{\dagger}\|_{F}^{2}\leq\min\big\{\gamma_{1}+\|B^{\dagger}EA^{\dagger}\|_{F}^{2}, \gamma_{2}+\|A^{\dagger}EB^{\dagger}\|_{F}^{2}\big\},
\end{equation}
where
\begin{align*}
&\gamma_{1}:=\|A^{\dagger}\|_{2}^{2}\bigg(\|A^{\dagger}E\|_{F}^{2}-\frac{\|A^{\dagger}EB^{\dagger}\|_{F}^{2}}{\|B^{\dagger}\|_{2}^{2}}\bigg)+\|B^{\dagger}\|_{2}^{2}\bigg(\|EB^{\dagger}\|_{F}^{2}-\frac{\|A^{\dagger}EB^{\dagger}\|_{F}^{2}}{\|A^{\dagger}\|_{2}^{2}}\bigg),\\
&\gamma_{2}:=\|A^{\dagger}\|_{2}^{2}\bigg(\|EA^{\dagger}\|_{F}^{2}-\frac{\|B^{\dagger}EA^{\dagger}\|_{F}^{2}}{\|B^{\dagger}\|_{2}^{2}}\bigg)+\|B^{\dagger}\|_{2}^{2}\bigg(\|B^{\dagger}E\|_{F}^{2}-\frac{\|B^{\dagger}EA^{\dagger}\|_{F}^{2}}{\|A^{\dagger}\|_{2}^{2}}\bigg).
\end{align*}
\end{corollary}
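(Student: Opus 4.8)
The plan is to derive the estimate~\eqref{corup1.2} directly from Theorem~\ref{up-thm1} by showing that $\alpha_{1}\leq\gamma_{1}$ and $\alpha_{2}\leq\gamma_{2}$. Since~\eqref{up1} already gives $\|B^{\dagger}-A^{\dagger}\|_{F}^{2}\leq\min\{\alpha_{1}+\|B^{\dagger}EA^{\dagger}\|_{F}^{2},\alpha_{2}+\|A^{\dagger}EB^{\dagger}\|_{F}^{2}\}$, these two scalar inequalities immediately yield~\eqref{corup1.2}. Thus the whole task reduces to comparing the quantities $\alpha_{i}$ and $\gamma_{i}$, which differ only in their subtracted terms.

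The key tools are the Moore--Penrose identities $A^{\dagger}AA^{\dagger}=A^{\dagger}$ and $B^{\dagger}BB^{\dagger}=B^{\dagger}$ (property (ii) in the definition of the MP inverse) together with the submultiplicative bounds $\|XY\|_{F}\leq\|X\|_{F}\|Y\|_{2}$ and $\|XY\|_{F}\leq\|X\|_{2}\|Y\|_{F}$. First I would compare $\alpha_{1}$ and $\gamma_{1}$ term by term. For the first summand, writing $A^{\dagger}EB^{\dagger}=(A^{\dagger}EB^{\dagger}B)B^{\dagger}$ via $B^{\dagger}BB^{\dagger}=B^{\dagger}$ gives $\|A^{\dagger}EB^{\dagger}\|_{F}\leq\|A^{\dagger}EB^{\dagger}B\|_{F}\|B^{\dagger}\|_{2}$, hence $\|A^{\dagger}EB^{\dagger}B\|_{F}^{2}\geq\|A^{\dagger}EB^{\dagger}\|_{F}^{2}/\|B^{\dagger}\|_{2}^{2}$. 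For the second summand, writing $A^{\dagger}EB^{\dagger}=A^{\dagger}(AA^{\dagger}EB^{\dagger})$ via $A^{\dagger}AA^{\dagger}=A^{\dagger}$ gives $\|A^{\dagger}EB^{\dagger}\|_{F}\leq\|A^{\dagger}\|_{2}\|AA^{\dagger}EB^{\dagger}\|_{F}$, hence $\|AA^{\dagger}EB^{\dagger}\|_{F}^{2}\geq\|A^{\dagger}EB^{\dagger}\|_{F}^{2}/\|A^{\dagger}\|_{2}^{2}$. Substituting these two lower bounds into the definition of $\alpha_{1}$ shows that the terms subtracted in $\alpha_{1}$ are at least as large as those subtracted in $\gamma_{1}$, so that $\alpha_{1}\leq\gamma_{1}$.

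The inequality $\alpha_{2}\leq\gamma_{2}$ then follows by the same argument applied to $B^{\dagger}EA^{\dagger}$: the factorizations $B^{\dagger}EA^{\dagger}=B^{\dagger}(BB^{\dagger}EA^{\dagger})$ and $B^{\dagger}EA^{\dagger}=(B^{\dagger}EA^{\dagger}A)A^{\dagger}$ yield $\|BB^{\dagger}EA^{\dagger}\|_{F}^{2}\geq\|B^{\dagger}EA^{\dagger}\|_{F}^{2}/\|B^{\dagger}\|_{2}^{2}$ and $\|B^{\dagger}EA^{\dagger}A\|_{F}^{2}\geq\|B^{\dagger}EA^{\dagger}\|_{F}^{2}/\|A^{\dagger}\|_{2}^{2}$, which together give $\alpha_{2}\leq\gamma_{2}$. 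Combining $\alpha_{1}\leq\gamma_{1}$ and $\alpha_{2}\leq\gamma_{2}$ with~\eqref{up1} completes the argument.

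There is no serious obstacle here; the only point requiring care is spotting the correct factorizations, namely inserting the idempotents $B^{\dagger}B$ and $AA^{\dagger}$ (and their analogues) so that the outer factor $A^{\dagger}$ or $B^{\dagger}$ can be peeled off and its spectral norm extracted. Once these four projection inequalities are in hand, the conclusion is purely mechanical. I would also note that the resulting bound~\eqref{corup1.2} is in general weaker than~\eqref{up1}, but it is expressed entirely through the more symmetric quantities $\|A^{\dagger}E\|_{F}$, $\|EB^{\dagger}\|_{F}$, and $\|A^{\dagger}EB^{\dagger}\|_{F}$ (together with their $A\leftrightarrow B$ counterparts), which makes it more directly comparable with the existing estimates.
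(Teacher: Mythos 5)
Your proof is correct and takes essentially the same route as the paper: the paper's own proof consists precisely of the two inequalities $\|A^{\dagger}EB^{\dagger}B\|_{F}^{2}\geq\|A^{\dagger}EB^{\dagger}\|_{F}^{2}/\|B^{\dagger}\|_{2}^{2}$ and $\|AA^{\dagger}EB^{\dagger}\|_{F}^{2}\geq\|A^{\dagger}EB^{\dagger}\|_{F}^{2}/\|A^{\dagger}\|_{2}^{2}$ (stated there as ``easy to check''), followed by an application of Theorem~\ref{up-thm1}, which is exactly your reduction $\alpha_{1}\leq\gamma_{1}$, $\alpha_{2}\leq\gamma_{2}$. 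Your write-up merely makes explicit the factorizations through $B^{\dagger}BB^{\dagger}=B^{\dagger}$ and $A^{\dagger}AA^{\dagger}=A^{\dagger}$ that justify those inequalities, so it is a correctly fleshed-out version of the paper's argument.
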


\begin{proof}
It is easy to verify that
\begin{displaymath}
\|A^{\dagger}EB^{\dagger}B\|_{F}^{2}\geq\frac{\|A^{\dagger}EB^{\dagger}\|_{F}^{2}}{\|B^{\dagger}\|_{2}^{2}} \quad \text{and} \quad \|AA^{\dagger}EB^{\dagger}\|_{F}^{2}\geq\frac{\|A^{\dagger}EB^{\dagger}\|_{F}^{2}}{\|A^{\dagger}\|_{2}^{2}}.
\end{displaymath}
The estimate~\eqref{corup1.2} then follows from Theorem~\ref{up-thm1}.
\end{proof}

The following theorem provides the sharper counterparts of~\eqref{Meng1}, \eqref{Meng2}, and~\eqref{Li1}.

\begin{theorem}\label{up-thm2}
Let $A\in\mathbb{C}^{m\times n}_{r}$, $B\in\mathbb{C}^{m\times n}_{s}$, and $E=B-A$. Then
\begin{equation}\label{up2.1}
\|B^{\dagger}-A^{\dagger}\|_{F}^{2}\leq\min\big\{\delta_{1}+\|B^{\dagger}EA^{\dagger}\|_{F}^{2}, \delta_{2}+\|A^{\dagger}EB^{\dagger}\|_{F}^{2}\big\},
\end{equation}
where
\begin{align*}
&\delta_{1}:=\max\big\{\|A^{\dagger}\|_{2}^{4},\|B^{\dagger}\|_{2}^{4}\big\}\bigg(\|E\|_{F}^{2}-\max\bigg\{\frac{\|AA^{\dagger}EB^{\dagger}\|_{F}^{2}}{\|B^{\dagger}\|_{2}^{2}},\frac{\|A^{\dagger}EB^{\dagger}B\|_{F}^{2}}{\|A^{\dagger}\|_{2}^{2}}\bigg\}\bigg),\\
&\delta_{2}:=\max\big\{\|A^{\dagger}\|_{2}^{4},\|B^{\dagger}\|_{2}^{4}\big\}\bigg(\|E\|_{F}^{2}-\max\bigg\{\frac{\|BB^{\dagger}EA^{\dagger}\|_{F}^{2}}{\|A^{\dagger}\|_{2}^{2}},\frac{\|B^{\dagger}EA^{\dagger}A\|_{F}^{2}}{\|B^{\dagger}\|_{2}^{2}}\bigg\}\bigg).
\end{align*}
In particular, if $s=r$, then
\begin{equation}\label{up2.2}
\|B^{\dagger}-A^{\dagger}\|_{F}^{2}\leq\min\big\{\varepsilon_{1}+\|B^{\dagger}EA^{\dagger}\|_{F}^{2}, \varepsilon_{2}+\|A^{\dagger}EB^{\dagger}\|_{F}^{2}\big\},
\end{equation}
where
\begin{align*}
&\varepsilon_{1}:=\|A^{\dagger}\|_{2}^{2}\|B^{\dagger}\|_{2}^{2}\bigg(\|E\|_{F}^{2}-\max\bigg\{\frac{\|BB^{\dagger}EA^{\dagger}\|_{F}^{2}}{\|A^{\dagger}\|_{2}^{2}},\frac{\|B^{\dagger}EA^{\dagger}A\|_{F}^{2}}{\|B^{\dagger}\|_{2}^{2}}\bigg\}\bigg),\\
&\varepsilon_{2}:=\|A^{\dagger}\|_{2}^{2}\|B^{\dagger}\|_{2}^{2}\bigg(\|E\|_{F}^{2}-\max\bigg\{\frac{\|AA^{\dagger}EB^{\dagger}\|_{F}^{2}}{\|B^{\dagger}\|_{2}^{2}},\frac{\|A^{\dagger}EB^{\dagger}B\|_{F}^{2}}{\|A^{\dagger}\|_{2}^{2}}\bigg\}\bigg).
\end{align*}
\end{theorem}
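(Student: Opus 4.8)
The plan is to derive the $\delta_{1}$-bound from the corollary estimate~\eqref{upE+1.1}, so that it suffices to show
\begin{displaymath}
\|B^{\dagger}\|_{2}^{2}\|\widetilde{U}_{1}^{\ast}U_{2}\|_{F}^{2}+\|A^{\dagger}\|_{2}^{2}\|\widetilde{V}_{2}^{\ast}V_{1}\|_{F}^{2}\leq\delta_{1}.
\end{displaymath}
The backbone of the argument is an exact orthogonal splitting of $\|E\|_{F}^{2}$. Reading off the block form of $U^{\ast}E\widetilde{V}$ in~\eqref{E1} and writing $D:=U_{1}^{\ast}\widetilde{U}_{1}\widetilde{\Sigma}_{1}-\Sigma_{1}V_{1}^{\ast}\widetilde{V}_{1}$ for its leading diagonal block, I would record
\begin{displaymath}
\|E\|_{F}^{2}=\|D\|_{F}^{2}+\|\widetilde{V}_{2}^{\ast}V_{1}\Sigma_{1}\|_{F}^{2}+\|\widetilde{\Sigma}_{1}\widetilde{U}_{1}^{\ast}U_{2}\|_{F}^{2},
\end{displaymath}
where, by~\eqref{EB+B} and~\eqref{AA+E}, the two off-diagonal pieces equal $\|E\|_{F}^{2}-\|EB^{\dagger}B\|_{F}^{2}$ and $\|E\|_{F}^{2}-\|AA^{\dagger}E\|_{F}^{2}$, respectively.

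Next I would peel singular values off the two off-diagonal blocks. Since the smallest diagonal entries of $\Sigma_{1}$ and $\widetilde{\Sigma}_{1}$ are $\|A^{\dagger}\|_{2}^{-1}$ and $\|B^{\dagger}\|_{2}^{-1}$, one has
\begin{displaymath}
\|\widetilde{V}_{2}^{\ast}V_{1}\|_{F}^{2}\leq\|A^{\dagger}\|_{2}^{2}\|\widetilde{V}_{2}^{\ast}V_{1}\Sigma_{1}\|_{F}^{2} \quad \text{and} \quad \|\widetilde{U}_{1}^{\ast}U_{2}\|_{F}^{2}\leq\|B^{\dagger}\|_{2}^{2}\|\widetilde{\Sigma}_{1}\widetilde{U}_{1}^{\ast}U_{2}\|_{F}^{2}.
\end{displaymath}
The step I expect to be the main obstacle is to show that the single diagonal block $\|D\|_{F}^{2}$ simultaneously dominates both weighted residuals occurring in $\delta_{1}$. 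The key observation is that the block computations already carried out in the proof of Theorem~\ref{up-thm1} give $\|AA^{\dagger}EB^{\dagger}\|_{F}^{2}=\|D\widetilde{\Sigma}_{1}^{-1}\|_{F}^{2}$ and $\|A^{\dagger}EB^{\dagger}B\|_{F}^{2}=\|\Sigma_{1}^{-1}D\|_{F}^{2}$; estimating the spectral norms $\|\widetilde{\Sigma}_{1}^{-1}\|_{2}^{2}=\|B^{\dagger}\|_{2}^{2}$ and $\|\Sigma_{1}^{-1}\|_{2}^{2}=\|A^{\dagger}\|_{2}^{2}$ then yields
\begin{displaymath}
\|D\|_{F}^{2}\geq\max\bigg\{\frac{\|AA^{\dagger}EB^{\dagger}\|_{F}^{2}}{\|B^{\dagger}\|_{2}^{2}},\frac{\|A^{\dagger}EB^{\dagger}B\|_{F}^{2}}{\|A^{\dagger}\|_{2}^{2}}\bigg\}.
\end{displaymath}

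Combining these ingredients, the left-hand side is at most $\|B^{\dagger}\|_{2}^{4}\|\widetilde{\Sigma}_{1}\widetilde{U}_{1}^{\ast}U_{2}\|_{F}^{2}+\|A^{\dagger}\|_{2}^{4}\|\widetilde{V}_{2}^{\ast}V_{1}\Sigma_{1}\|_{F}^{2}$, hence at most $\max\{\|A^{\dagger}\|_{2}^{4},\|B^{\dagger}\|_{2}^{4}\}$ times the sum of the two off-diagonal pieces, i.e. $\max\{\|A^{\dagger}\|_{2}^{4},\|B^{\dagger}\|_{2}^{4}\}(\|E\|_{F}^{2}-\|D\|_{F}^{2})$; the lower bound on $\|D\|_{F}^{2}$ now produces exactly $\delta_{1}$. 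The $\delta_{2}$-bound follows by repeating the argument from~\eqref{upE+1.2}, i.e. by interchanging the roles of $A$ and $B$ (equivalently, splitting $\|E\|_{F}^{2}$ via the block form of $\widetilde{U}^{\ast}EV$). Finally, for the refinement $s=r$ I would again begin from~\eqref{upE+1.1} and~\eqref{upE+1.2}, but invoke Lemma~\ref{s=r} to replace $\|\widetilde{U}_{1}^{\ast}U_{2}\|_{F}$ by $\|\widetilde{U}_{2}^{\ast}U_{1}\|_{F}$ and $\|\widetilde{V}_{2}^{\ast}V_{1}\|_{F}$ by $\|\widetilde{V}_{1}^{\ast}V_{2}\|_{F}$ (and conversely). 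This interchange is precisely what turns the two separate factors $\|A^{\dagger}\|_{2}^{4}$ and $\|B^{\dagger}\|_{2}^{4}$ into the common product $\|A^{\dagger}\|_{2}^{2}\|B^{\dagger}\|_{2}^{2}$: after the swap each cross term carries the factor $\|A^{\dagger}\|_{2}^{2}\|B^{\dagger}\|_{2}^{2}$, and the very same diagonal-block domination, now applied to the splitting associated with $\widetilde{U}^{\ast}EV$, furnishes $\varepsilon_{1}$, while the estimate involving $\varepsilon_{2}$ is entirely symmetric.
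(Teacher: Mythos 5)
Your proposal is correct and follows essentially the same route as the paper: the paper likewise combines~\eqref{upE+1.1} with the orthogonal splitting of $\|E\|_{F}^{2}$ from the block form~\eqref{E1} (resp.~\eqref{E2} with Lemma~\ref{s=r} when $s=r$), factoring $\widetilde{\Sigma}_{1}$ or $\Sigma_{1}$ out of the leading block to get the same two lower bounds you package as $\|D\|_{F}^{2}\geq\max\{\|AA^{\dagger}EB^{\dagger}\|_{F}^{2}/\|B^{\dagger}\|_{2}^{2},\,\|A^{\dagger}EB^{\dagger}B\|_{F}^{2}/\|A^{\dagger}\|_{2}^{2}\}$, and obtaining $\delta_{2}$, $\varepsilon_{2}$ by interchanging $A$ and $B$. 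Your only deviation is presentational—naming the diagonal block $D$ and stating a single domination inequality where the paper writes two parallel estimate chains.
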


\begin{proof}
According to~\eqref{E1}, we deduce that
\begin{align*}
\|E\|_{F}^{2}&=\|(U_{1}^{\ast}\widetilde{U}_{1}-\Sigma_{1}V_{1}^{\ast}\widetilde{V}_{1}\widetilde{\Sigma}_{1}^{-1})\widetilde{\Sigma}_{1}\|_{F}^{2}+\|\Sigma_{1}V_{1}^{\ast}\widetilde{V}_{2}\|_{F}^{2}+\|U_{2}^{\ast}\widetilde{U}_{1}\widetilde{\Sigma}_{1}\|_{F}^{2}\\
&\geq\frac{\|AA^{\dagger}EB^{\dagger}\|_{F}^{2}}{\|B^{\dagger}\|_{2}^{2}}+\frac{\|\widetilde{V}_{2}^{\ast}V_{1}\|_{F}^{2}}{\|A^{\dagger}\|_{2}^{2}}+\frac{\|\widetilde{U}_{1}^{\ast}U_{2}\|_{F}^{2}}{\|B^{\dagger}\|_{2}^{2}}
\end{align*}
and
\begin{align*}
\|E\|_{F}^{2}&=\|\Sigma_{1}(\Sigma_{1}^{-1}U_{1}^{\ast}\widetilde{U}_{1}\widetilde{\Sigma}_{1}-V_{1}^{\ast}\widetilde{V}_{1})\|_{F}^{2}+\|\Sigma_{1}V_{1}^{\ast}\widetilde{V}_{2}\|_{F}^{2}+\|U_{2}^{\ast}\widetilde{U}_{1}\widetilde{\Sigma}_{1}\|_{F}^{2}\\
&\geq\frac{\|A^{\dagger}EB^{\dagger}B\|_{F}^{2}}{\|A^{\dagger}\|_{2}^{2}}+\frac{\|\widetilde{V}_{2}^{\ast}V_{1}\|_{F}^{2}}{\|A^{\dagger}\|_{2}^{2}}+\frac{\|\widetilde{U}_{1}^{\ast}U_{2}\|_{F}^{2}}{\|B^{\dagger}\|_{2}^{2}}.
\end{align*}
Hence,
\begin{equation}\label{up-UV1}
\frac{\|\widetilde{U}_{1}^{\ast}U_{2}\|_{F}^{2}}{\|B^{\dagger}\|_{2}^{2}}+\frac{\|\widetilde{V}_{2}^{\ast}V_{1}\|_{F}^{2}}{\|A^{\dagger}\|_{2}^{2}}\leq\|E\|_{F}^{2}-\max\bigg\{\frac{\|AA^{\dagger}EB^{\dagger}\|_{F}^{2}}{\|B^{\dagger}\|_{2}^{2}},\frac{\|A^{\dagger}EB^{\dagger}B\|_{F}^{2}}{\|A^{\dagger}\|_{2}^{2}}\bigg\}.
\end{equation}
Using~\eqref{upE+1.1} and~\eqref{up-UV1}, we obtain
\begin{align*}
\|B^{\dagger}-A^{\dagger}\|_{F}^{2}&\leq\max\big\{\|A^{\dagger}\|_{2}^{4},\|B^{\dagger}\|_{2}^{4}\big\}\bigg(\frac{\|\widetilde{U}_{1}^{\ast}U_{2}\|_{F}^{2}}{\|B^{\dagger}\|_{2}^{2}}+\frac{\|\widetilde{V}_{2}^{\ast}V_{1}\|_{F}^{2}}{\|A^{\dagger}\|_{2}^{2}}\bigg)+\|B^{\dagger}EA^{\dagger}\|_{F}^{2}\\
&\leq\delta_{1}+\|B^{\dagger}EA^{\dagger}\|_{F}^{2}.
\end{align*}
Interchanging the roles of $A$ and $B$, we arrive at
\begin{displaymath}
\|B^{\dagger}-A^{\dagger}\|_{F}^{2}\leq\delta_{2}+\|A^{\dagger}EB^{\dagger}\|_{F}^{2}.
\end{displaymath}
Thus, the estimate~\eqref{up2.1} is valid.

We next consider the special case $s=r$. Direct computation yields
\begin{displaymath}
\widetilde{U}^{\ast}EV=\begin{pmatrix}
\widetilde{\Sigma}_{1}\widetilde{V}_{1}^{\ast}V_{1}-\widetilde{U}_{1}^{\ast}U_{1}\Sigma_{1} & \widetilde{\Sigma}_{1}\widetilde{V}_{1}^{\ast}V_{2} \\
-\widetilde{U}_{2}^{\ast}U_{1}\Sigma_{1} & 0
\end{pmatrix},
\end{displaymath}
which leads to
\begin{equation}\label{E2}
\|E\|_{F}^{2}=\|\widetilde{\Sigma}_{1}\widetilde{V}_{1}^{\ast}V_{1}-\widetilde{U}_{1}^{\ast}U_{1}\Sigma_{1}\|_{F}^{2}+\|\widetilde{\Sigma}_{1}\widetilde{V}_{1}^{\ast}V_{2}\|_{F}^{2}+\|\widetilde{U}_{2}^{\ast}U_{1}\Sigma_{1}\|_{F}^{2}.
\end{equation}
If $s=r$, we get from~\eqref{E2} and Lemma~\ref{s=r} that
\begin{align*}
\|E\|_{F}^{2}&=\|(\widetilde{\Sigma}_{1}\widetilde{V}_{1}^{\ast}V_{1}\Sigma_{1}^{-1}-\widetilde{U}_{1}^{\ast}U_{1})\Sigma_{1}\|_{F}^{2}+\|\widetilde{\Sigma}_{1}\widetilde{V}_{1}^{\ast}V_{2}\|_{F}^{2}+\|\widetilde{U}_{2}^{\ast}U_{1}\Sigma_{1}\|_{F}^{2}\\
&\geq\frac{\|BB^{\dagger}EA^{\dagger}\|_{F}^{2}}{\|A^{\dagger}\|_{2}^{2}}+\frac{\|\widetilde{V}_{2}^{\ast}V_{1}\|_{F}^{2}}{\|B^{\dagger}\|_{2}^{2}}+\frac{\|\widetilde{U}_{1}^{\ast}U_{2}\|_{F}^{2}}{\|A^{\dagger}\|_{2}^{2}}
\end{align*}
and
\begin{align*}
\|E\|_{F}^{2}&=\|\widetilde{\Sigma}_{1}(\widetilde{V}_{1}^{\ast}V_{1}-\widetilde{\Sigma}_{1}^{-1}\widetilde{U}_{1}^{\ast}U_{1}\Sigma_{1})\|_{F}^{2}+\|\widetilde{\Sigma}_{1}\widetilde{V}_{1}^{\ast}V_{2}\|_{F}^{2}+\|\widetilde{U}_{2}^{\ast}U_{1}\Sigma_{1}\|_{F}^{2}\\
&\geq\frac{\|B^{\dagger}EA^{\dagger}A\|_{F}^{2}}{\|B^{\dagger}\|_{2}^{2}}+\frac{\|\widetilde{V}_{2}^{\ast}V_{1}\|_{F}^{2}}{\|B^{\dagger}\|_{2}^{2}}+\frac{\|\widetilde{U}_{1}^{\ast}U_{2}\|_{F}^{2}}{\|A^{\dagger}\|_{2}^{2}}.
\end{align*}
Hence,
\begin{equation}\label{up-UV2}
\frac{\|\widetilde{U}_{1}^{\ast}U_{2}\|_{F}^{2}}{\|A^{\dagger}\|_{2}^{2}}+\frac{\|\widetilde{V}_{2}^{\ast}V_{1}\|_{F}^{2}}{\|B^{\dagger}\|_{2}^{2}}\leq\|E\|_{F}^{2}-\max\bigg\{\frac{\|BB^{\dagger}EA^{\dagger}\|_{F}^{2}}{\|A^{\dagger}\|_{2}^{2}},\frac{\|B^{\dagger}EA^{\dagger}A\|_{F}^{2}}{\|B^{\dagger}\|_{2}^{2}}\bigg\}.
\end{equation}
By~\eqref{upE+1.1} and~\eqref{up-UV2}, we have
\begin{align*}
\|B^{\dagger}-A^{\dagger}\|_{F}^{2}&\leq\|A^{\dagger}\|_{2}^{2}\|B^{\dagger}\|_{2}^{2}\bigg(\frac{\|\widetilde{U}_{1}^{\ast}U_{2}\|_{F}^{2}}{\|A^{\dagger}\|_{2}^{2}}+\frac{\|\widetilde{V}_{2}^{\ast}V_{1}\|_{F}^{2}}{\|B^{\dagger}\|_{2}^{2}}\bigg)+\|B^{\dagger}EA^{\dagger}\|_{F}^{2}\\
&\leq\varepsilon_{1}+\|B^{\dagger}EA^{\dagger}\|_{F}^{2}.
\end{align*}
Interchanging the roles of $A$ and $B$ yields
\begin{displaymath}
\|B^{\dagger}-A^{\dagger}\|_{F}^{2}\leq\varepsilon_{2}+\|A^{\dagger}EB^{\dagger}\|_{F}^{2}.
\end{displaymath}
Therefore, the estimate~\eqref{up2.2} is proved. This completes the proof.
\end{proof}

\begin{remark}\rm
From~\eqref{up2.1}, we deduce that
\begin{equation}\label{comp1}
\|B^{\dagger}-A^{\dagger}\|_{F}^{2}\leq\frac{1}{2}\big(\delta_{1}+\delta_{2}+\|A^{\dagger}EB^{\dagger}\|_{F}^{2}+\|B^{\dagger}EA^{\dagger}\|_{F}^{2}\big).
\end{equation}
Using the inequalities
\begin{displaymath}
\|AA^{\dagger}EB^{\dagger}\|_{F}^{2}\geq\frac{\|A^{\dagger}EB^{\dagger}\|_{F}^{2}}{\|A^{\dagger}\|_{2}^{2}} \quad \text{and} \quad \|BB^{\dagger}EA^{\dagger}\|_{F}^{2}\geq\frac{\|B^{\dagger}EA^{\dagger}\|_{F}^{2}}{\|B^{\dagger}\|_{2}^{2}},
\end{displaymath}
we obtain
\begin{align*}
\delta_{1}+\delta_{2}&\leq\max\big\{\|A^{\dagger}\|_{2}^{4},\|B^{\dagger}\|_{2}^{4}\big\}\bigg(2\|E\|_{F}^{2}-\frac{\|A^{\dagger}EB^{\dagger}\|_{F}^{2}+\|B^{\dagger}EA^{\dagger}\|_{F}^{2}}{\|A^{\dagger}\|_{2}^{2}\|B^{\dagger}\|_{2}^{2}}\bigg)\\
&=2\max\big\{\|A^{\dagger}\|_{2}^{4},\|B^{\dagger}\|_{2}^{4}\big\}\|E\|_{F}^{2}-\max\bigg\{\frac{\|A^{\dagger}\|_{2}^{2}}{\|B^{\dagger}\|_{2}^{2}},\frac{\|B^{\dagger}\|_{2}^{2}}{\|A^{\dagger}\|_{2}^{2}}\bigg\}\big(\|A^{\dagger}EB^{\dagger}\|_{F}^{2}+\|B^{\dagger}EA^{\dagger}\|_{F}^{2}\big)\\
&\leq 2\max\big\{\|A^{\dagger}\|_{2}^{4},\|B^{\dagger}\|_{2}^{4}\big\}\|E\|_{F}^{2}-\big(\|A^{\dagger}EB^{\dagger}\|_{F}^{2}+\|B^{\dagger}EA^{\dagger}\|_{F}^{2}\big).
\end{align*}
Thus, the estimate~\eqref{comp1} is sharper than both~\eqref{Meng1} and~\eqref{Li1}. Furthermore, since
\begin{displaymath}
\|B^{\dagger}\|_{2}^{2}\|BB^{\dagger}EA^{\dagger}\|_{F}^{2}\geq\|B^{\dagger}EA^{\dagger}\|_{F}^{2} \quad \text{and} \quad \|A^{\dagger}\|_{2}^{2}\|AA^{\dagger}EB^{\dagger}\|_{F}^{2}\geq\|A^{\dagger}EB^{\dagger}\|_{F}^{2},
\end{displaymath}
it follows that the estimate~\eqref{up2.2} is sharper than~\eqref{Meng2}.
\end{remark}

\begin{remark}\rm
Under the setting of~\eqref{Ex0}, the upper bounds in~\eqref{up1}, \eqref{corup1.1}, \eqref{corup1.2}, and~\eqref{up2.1} are listed in Table~\ref{tab:up}.
\begin{table}[h!!]
\centering
\setlength{\tabcolsep}{16mm}{
\begin{tabular}{@{} cc @{}}
\toprule
\text{Estimate} & \text{Upper bound for $\|B^{\dagger}-A^{\dagger}\|_{F}^{2}$} \\
\midrule
\eqref{up1} & $4\tau^{2}+\frac{1}{\tau^{2}}$ \\
\eqref{corup1.1} & $4\tau^{2}+\frac{1}{\tau^{2}}$ \\
\eqref{corup1.2} & $4\tau^{2}+\frac{1}{\tau^{2}}+\frac{4\tau^{2}}{(1+2\tau)^{2}}-4\tau^{4}$ \\
\eqref{up2.1} & $4\tau^{2}+\frac{1}{\tau^{2}}$ \\
\bottomrule
\end{tabular}}
\caption{\small The upper bounds in~\eqref{up1}, \eqref{corup1.1}, \eqref{corup1.2}, and~\eqref{up2.1}.}
\label{tab:up}
\end{table}
Table~\ref{tab:up} shows that the upper bounds in~\eqref{up1}, \eqref{corup1.1}, and~\eqref{up2.1} have attained the exact value $4\tau^{2}+\frac{1}{\tau^{2}}$. In addition, if $\tau$ is sufficiently small (i.e., the perturbed matrix is very close to the original one), the upper bound in~\eqref{corup1.2} will be very close to the exact value.
\end{remark}

\section*{Acknowledgments}

The author would like to thank the anonymous referees for their valuable comments and suggestions, which greatly improved the original version of this paper. This research was carried out by the author during his Ph.D. study at the Academy of Mathematics and Systems Science, Chinese Academy of Sciences. The author is grateful to Professor Chen-Song Zhang for his kind support.

\bibliographystyle{abbrv}
\bibliography{references}

\end{document}